\def\RSthmtxt{theorem~}\newref{thm}{name = \RSthmtxt}}
\def\RSlemtxt{lemma~}\newref{lem}{name = \RSlemtxt}}
\theoremstyle{plain}
\newtheorem{thm}{\protect\theoremname}[section]
\theoremstyle{definition}
\newtheorem{defn}[thm]{\protect\definitionname}
\theoremstyle{plain}
\newtheorem{lem}[thm]{\protect\lemmaname}
\theoremstyle{remark}
\newtheorem{rem}[thm]{\protect\remarkname}
\theoremstyle{plain}
\newtheorem{prop}[thm]{\protect\propositionname}
\theoremstyle{plain}
\newtheorem{cor}[thm]{\protect\corollaryname}
\theoremstyle{definition}
\newtheorem{example}[thm]{\protect\examplename}
\def\@fnsymbol#1{\ensuremath{\ifcase#1\or *\or **\or \ddagger\or
   \mathsection\or \mathparagraph\or \|\or \dagger\dagger
   \or \ddagger\ddagger \else\@ctrerr\fi}}
\def\namedlabel#1#2{\begingroup
    #2%
    \def\@currentlabel{#2}%
    \phantomsection\label{#1}\endgroup
}
\tikzset{   pt/.style={insert path={node[scale=2]{.}}},   dnup/.style={insert path={ [pt] .. controls +(0,1) and +(0,-1) .. +(#1,2) [pt]}},   dndn/.style={insert path={ [pt] .. controls +(0,0.25) and +(0,0.25) .. +(#1,0) [pt]}},   upup/.style={insert path={ [pt] .. controls +(0,-0.25) and +(0,-0.25) .. +(#1,0) [pt]}}, upup2/.style={insert path={ [pt] .. controls +(0,-0.5) and +(0,-0.5) .. +(#1,0) [pt]}}, }
\newcommand{\Rt}{\widetilde{\mathcal{R}}_E}
\newcommand{\Lt}{\widetilde{\mathcal{L}}_E}
\DeclareMathOperator{\id}{id}
\DeclareMathOperator{\Rad}{Rad}
\DeclareMathOperator{\im}{\mathsf{im}}
\DeclareMathOperator{\PT}{\mathcal{PT}}
\DeclareMathOperator{\T}{\mathcal{T}}
\DeclareMathOperator{\B}{\mathcal{B}}
\DeclareMathOperator{\Rc}{\mathcal{R}}
\DeclareMathOperator{\Lc}{\mathcal{L}}
\DeclareMathOperator{\Jc}{\mathcal{J}}
\DeclareMathOperator{\dom}{\mathsf{dom}}
\DeclareMathOperator{\op}{op}
\DeclareMathOperator{\db}{\mathbf{d}}
\DeclareMathOperator{\rb}{\mathbf{r}}
\DeclareMathOperator{\Op}{\mathcal{O}}
\DeclareMathOperator{\OF}{\mathcal{OF}}
\def\RSlemtxt{Lemma~}
\def\RSthmtxt{Theorem~}
\providecommand{\corollaryname}{Corollary}
  \providecommand{\definitionname}{Definition}
  \providecommand{\examplename}{Example}
  \providecommand{\lemmaname}{Lemma}
  \providecommand{\propositionname}{Proposition}
  \providecommand{\remarkname}{Remark}
\providecommand{\theoremname}{Theorem}
\providecommand{\corollaryname}{Corollary}
\providecommand{\definitionname}{Definition}
\providecommand{\examplename}{Example}
\providecommand{\lemmaname}{Lemma}
\providecommand{\propositionname}{Proposition}
\providecommand{\remarkname}{Remark}
\providecommand{\theoremname}{Theorem}
\begin{document}
\title{The algebra of the monoid of order-preserving functions on an $n$-set
and other reduced $E$-Fountain semigroups}
\author{Itamar Stein\thanks{Mathematics Unit, Shamoon College of Engineering, 77245 Ashdod, Israel}\\
\Envelope \, Steinita@gmail.com}
\maketitle
\begin{abstract}
With every reduced $E$-Fountain semigroup $S$ which satisfies the
generalized right ample condition we associate a category with zero
morphisms $\mathcal{C}(S)$. Under some assumptions we prove an isomorphism
of $\Bbbk$-algebras $\Bbbk S\simeq\Bbbk_{0}\mathcal{C}(S)$ between
the semigroup algebra and the contracted category algebra where $\Bbbk$
is any commutative unital ring. This is a simultaneous generalization
of a former result of the author on reduced E-Fountain semigroups
which satisfy the congruence condition, a result of Junying Guo and
Xiaojiang Guo on strict right ample semigroups and a result of Benjamin
Steinberg on idempotent semigroups with central idempotents. The applicability
of the new isomorphism is demonstrated with two well-known monoids
which are not members of the above classes. The monoid of order-preserving
functions on an $n$-set and the monoid of binary relations with demonic
composition.
\end{abstract}

\section{Introduction}

Given a semigroup $S$ and a commutative ring $\Bbbk$, it is of interest
to study the semigroup algebra $\Bbbk S$. One approach is to associate
with $S$ a category $\mathcal{C}(S)$ such that the semigroup algebra
$\Bbbk S$ is isomorphic to the category algebra $\Bbbk\mathcal{C}(S)$.
It is usually easier to study the category algebra because many of
the natural basis elements are orthogonal. A pioneering work in this
direction was done by Solomon \cite{Solomon1967} who proved that
the semigroup algebra of a finite semilattice is isomorphic to the
algebra of a discrete category. This was generalized to finite inverse
semigroups by Steinberg \cite{Steinberg2006}. From the ESN theorem
\cite[Theorem 4.1.8]{Lawson1998} we know that every inverse semigroup
$S$ is associated with an inductive groupoid $G(S)$. Steinberg proved
that if $S$ is finite then $\Bbbk S\simeq\Bbbk G(S)$ and this result
is fundamental in the representation theory of finite inverse semigroups
\cite[Chapter 9]{Steinberg2016}. Guo and Chen \cite{Guo2012} obtained
a similar result for finite ample semigroups. Motivated by questions
on self-injective and Frobenius semigroup algebras, Guo and Guo \cite{Guo2018}
generalized this result to a class of semigroup they call strict right
ample semigroups. Taking a different direction, the author extended
the result of Guo and Chen on finite ample semigroups to a class of
right restriction $E$-Ehresmann semigroups \cite{Stein2017,Stein2018erratum}
(see also \cite{Wang2017}) and later to a wider class of reduced
$E$-Fountain semigroups \cite{Stein2021}. Let $S$ be a semigroup
and let $E$ be a distinguished subset of idempotents $E\subseteq S$
with the property that $ef=e\iff fe=e$ for $e,f\in E$. The semigroup
$S$ is called a reduced $E$-Fountain semigroup if for every element
$a\in S$ the sets of its right and left identities from $E$ have
minimum elements (with respect to the natural partial order on idempotents).
The minimal right identity of $a$ from $E$ is denoted $a^{\ast}$
and the minimal left identity of $a$ from $E$ is denoted $a^{+}$.
Reduced $E$-Fountain semigroups form a wide class that contains:
inverse semigroups, $\Jc$-trivial semigroups and many other types
of semigroups. If $S$ is reduced $E$-Fountain then we can define
a graph $\mathcal{C_{\bullet}}(S)$ whose objects are the idempotents
of $E$ and every $a\in S$ corresponds to a morphism $C(a)$ from
$a^{\ast}$ to $a^{+}$. If $S$ satisfies the congruence condition
(the identities $(ab)^{\ast}=(a^{\ast}b)^{\ast}$ and $(ab)^{+}=(ab^{+})^{+}$)
then $\mathcal{\mathcal{C_{\bullet}}}(S)$ becomes a category when
we define $C(b)C(a)=C(ba)$ if $b^{\ast}=a^{+}$. If, in addition,
$S$ satisfies some further conditions that will be described below
then it is proved in \cite[Theorem 4.4]{Stein2021} that $\Bbbk S\simeq\Bbbk\mathcal{C_{\bullet}}(S)$
for any commutative unital ring $\Bbbk$. This result has led to several
applications regarding quivers \cite{Stein2016,Stein2020}, global-dimension
\cite{Stein2019} and projective modules \cite{Margolis2021} for
algebras of semigroups of partial functions. A special case of this
result is the neat proof that the algebra of the Catalan monoid is
an incidence algebra \cite{Margolis2018A} and there are also applications
to the study of certain partition monoids \cite{East2021}.

Recently, as part of a detailed study of semigroup determinants \cite{Steinberg2022},
Steinberg proved a similar result for the class of finite idempotent
semigroups with central idempotents. These are finite semigroups that
satisfy $S^{2}=S$ and $ea=ae$ for every element $a\in S$ and idempotent
$e\in S$. Such a semigroup $S$ is reduced $E$-Fountain if we take
$E$ to be the set of all idempotents of $S$. In this case the associated
graph $\mathcal{C}_{\bullet}(S)$ contains only endomorphisms and
despite not being a category it is possible to add zero morphisms
and turn it into a category with zero morphisms. Then, the semigroup
algebra is isomorphic to the contracted algebra of the category with
zero morphisms \cite[Theorem 6.5]{Steinberg2022}.

The goal of this paper is to achieve a simultaneous generalization
for our result on reduced $E$-Fountain semigroups that satisfy the
congruence condition, Guo and Guo's result on strict right ample semigroups
and Steinberg's result on idempotent semigroups with central idempotents.
In \secref{Preliminaries} we give some required preliminaries. In
\secref{Isomorphism} we consider the generalized right ample condition,
discussed in \cite{Stein2021,stein2022} and show that it is equivalent
to the condition $(ba)^{\ast}=a^{\ast}\implies(ba^{+})^{\ast}=a^{+}$
for every $a,b\in S$. If a reduced $E$-Fountain semigroup $S$ satisfies
the generalized right ample condition, then we can add zero morphisms
to $\mathcal{C_{\bullet}}(S)$ and obtain a new graph $\mathcal{C}(S)$.
We define composition $C(b)\cdot C(a)$ of two non-zero morphisms
$C(b),C(a)$ with $b^{\ast}=a^{+}$ to be equal to $C(ba)$ if $(ba)^{\ast}=b^{\ast},\quad(ba)^{+}=a^{+}$
and $0$ otherwise. We prove that this endows $\mathcal{C}(S)$ with
a structure of a category with zero morphisms. We define a relation
$\trianglelefteq_{l}$ on $S$ by the rule that $a\trianglelefteq_{l}b$
if $a=be$ for an $e\in E$. The main theorem of \secref{Isomorphism}
shows that if a reduced $E$-Fountain semigroup $S$ satisfies the
generalized right ample condition and $\trianglelefteq_{l}$ is contained
in a principally finite partial order then $\Bbbk S\simeq\Bbbk_{0}\mathcal{C}(S)$
for every commutative unital ring $\Bbbk$ (where $\Bbbk_{0}\mathcal{C}(S)$
is the contracted algebra of $\mathcal{C}(S)$). In \secref{Generalization}
we explain why this is a generalization of the three above-mentioned
results of the author, Guo and Guo, and Steinberg. 

In \secref{Examples} we show that the above isomorphism applies to
two additional examples which are not included in any of the three
older results: The monoid $\Op_{n}$ of all order-preserving functions
on an $n$-element set (see \cite[Chapter 14]{Ganyushkin2009b} and
references therein) and the monoid of binary relations with demonic
composition, whose origins are in computer science \cite{Demonic2006}.

\textbf{Acknowledgment:} The author is grateful to the referee for
suggestions which improved the paper.

\section{\label{sec:Preliminaries}Preliminaries}

\subsection{\label{subsec:Semigroups}Semigroups and categories}

Let $S$ be a semigroup and let $S^{1}=S\cup\{1\}$ be the monoid
formed by adjoining a formal unit element. Recall that Green's preorders
$\leq_{\Rc}$, $\leq_{\Lc}$ and $\leq_{\Jc}$ are defined by:
\begin{align*}
a\,\leq_{\Rc}\,b\iff & aS^{1}\subseteq bS^{1}\\
a\,\leq_{\Lc}\,b\iff & S^{1}a\subseteq S^{1}b\\
a\,\leq_{\Jc}\,b\iff & S^{1}aS^{1}\subseteq S^{1}bS^{1}.
\end{align*}
The associated Green's equivalence relations on $S$ are denoted by
$\Rc$, $\Lc$ and $\Jc$. Additional basic notions and proofs of
semigroup theory can be found in standard textbooks such as \cite{Howie1995}.

We denote by $E(S)$ the set of idempotents of the semigroup $S$.
The natural partial order on $E(S)$ is defined by $e\leq f\iff(ef=e=fe)$
and note that $\leq\,=\,\leq_{\Rc}\cap\leq_{\Lc}$. For every subset
of idempotents $E\subseteq E(S)$ and every $a\in S$ we denote by
$a_{E}$ the set of right identities of $a$ from $E$:
\[
a_{E}=\{e\in E\mid ae=a\}
\]
Dually $\prescript{}{E}{a}$ is the set of left identities from $E$.
We define two equivalence relations $\Lt$ and $\Rt$ on $S$ by 
\[
a\,\Lt\,b\iff(a_{E}=b_{E})
\]
\[
a\,\Rt\,b\iff(\ensuremath{\prescript{}{E}{a}}=\ensuremath{\prescript{}{E}{b}}).
\]

These relations are one type of ``generalized'' Green's relations
(see \cite{Gould2010}). Note that $\Lc\subseteq\Lt$ and $\Rc\subseteq\Rt$.
\begin{defn}
The semigroup $S$ is called \emph{$E$-Fountain} if every $\Lt$-class
contains an idempotent from $E$ and every $\Rt$-class contains an
idempotent from $E$. 
\end{defn}

We remark that this property is also called ``$E$-semiabundant''
in the literature.
\begin{defn}
An $E$-Fountain semigroup $S$ is called \emph{reduced} if 
\[
\mbox{\ensuremath{ef=e\iff fe=e}}
\]
 for every $e,f\in E$. Equivalently, $S$ is reduced if $\leq_{\Lc}=\leq_{\Rc}=\leq$
when restricted to the set $E$.
\end{defn}

A reduced \emph{$E$-}Fountain\emph{ }semigroup is called a ``DR
semigroup'' in \cite{Stokes2015}.

Let $S$ be a semigroup and let $E\subseteq S$ be a subset of idempotents
which satisfies $\mbox{\ensuremath{ef=e\iff fe=e}}$ for every $e,f\in E$.
Then every $\Lt$-class ($\Rt$-class) contains at most one idempotent
from $E$. The $\Lt$-class ($\Rt$-class) of $a\in S$ contains an
element of $E$ if and only if the set $a_{E}$ (respectively, $\ensuremath{\prescript{}{E}{a}}$)
contains a minimum element. 

If $S$ is a reduced $E$-Fountain semigroup, the unique idempotent
from $E$ in the $\Lt$-class ($\Rt$-class) of $a\in S$ is denoted
$a^{\ast}$ (respectively, $a^{+}$). We emphasize that if $e\in E$
satisfies $ae=a$ then $ea^{\ast}=a^{\ast}e=a^{\ast}$ and dually
if $e\in E$ satisfies $ea=a$ then $ea^{+}=a^{+}e=a^{+}$. See \cite{Stokes2015}
for proofs and additional details. 

The following simple observation will be very useful.
\begin{lem}
\label{lem:AstPlusLemma}Let $S$ be a reduced $E$-Fountain semigroup
and let $a,b\in S$. Then $(ba)^{\ast}\leq a^{\ast}$ and $(ba)^{+}\leq b^{+}$.
\end{lem}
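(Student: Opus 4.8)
The plan is to prove both inequalities by exhibiting, in each case, a suitable one-sided identity for $ba$ drawn from $E$, and then invoking the minimality built into the definitions of $(ba)^{\ast}$ and $(ba)^{+}$. For the first inequality I would observe that $a^{\ast}$ already serves as a right identity for $ba$: since $aa^{\ast}=a$ by definition of $a^{\ast}$, associativity gives $(ba)a^{\ast}=b(aa^{\ast})=ba$, so $a^{\ast}\in(ba)_{E}$. Once $a^{\ast}$ is known to be a right identity of $ba$ from $E$, I would apply the emphasized fact stated just above the lemma: if $e\in E$ satisfies $xe=x$ then $ex^{\ast}=x^{\ast}e=x^{\ast}$, i.e.\ $x^{\ast}\leq e$. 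Taking $x=ba$ and $e=a^{\ast}$ yields $(ba)^{\ast}\leq a^{\ast}$, which is the first claim.

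For the second inequality I would argue dually using left identities. Since $b^{+}b=b$ by definition of $b^{+}$, associativity gives $b^{+}(ba)=(b^{+}b)a=ba$, so $b^{+}\in\prescript{}{E}{(ba)}$ is a left identity of $ba$ from $E$. Applying the dual form of the emphasized fact, namely that $ex=x$ implies $ex^{+}=x^{+}e=x^{+}$ and hence $x^{+}\leq e$, with $x=ba$ and $e=b^{+}$, gives $(ba)^{+}\leq b^{+}$.

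I do not expect any genuine obstacle here: the whole content is the recognition, via associativity, that the minimal right identity of $a$ is automatically a right identity of $ba$ (and dually that the minimal left identity of $b$ is a left identity of $ba$), after which the minimality of $(ba)^{\ast}$ and $(ba)^{+}$ among such idempotents does all the work. The only point requiring mild care is citing the natural partial order correctly, observing that the equalities $ex^{\ast}=x^{\ast}e=x^{\ast}$ are precisely the statement $x^{\ast}\leq e$.
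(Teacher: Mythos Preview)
Your proof is correct and follows essentially the same approach as the paper: show that $a^{\ast}$ is a right identity of $ba$ from $E$ via $baa^{\ast}=ba$, then invoke the emphasized minimality property to conclude $(ba)^{\ast}\leq a^{\ast}$, with the dual argument for $(ba)^{+}\leq b^{+}$. The paper's version is simply a terser rendering of the same steps.
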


\begin{proof}
The equality $baa^{\ast}=ba$ implies $(ba)^{\ast}a^{\ast}=(ba)^{\ast}$
so $(ba)^{\ast}\leq a^{\ast}$. Proving $(ba)^{+}\leq b^{+}$ is dual.
\end{proof}
Green's relation $\Lc$ ($\Rc$) is always a right congruence (respectively,
left congruence), but this is not the case with the generalized Green's
relations $\Lt$ and $\Rt$.
\begin{defn}
Let $S$ be a reduced $E$-Fountain semigroup. We say that $S$ satisfies
the \emph{right} \emph{congruence condition} if $\Lt$ is a right
congruence. It is known that this is equivalent to the identity $(ab)^{\ast}=(a^{\ast}b)^{\ast}$
for every $a,b\in S$ \cite[Lemma 4.1]{Gould2010}. Dually, $S$ satisfies
the \emph{left} \emph{congruence condition} if $\Rt$ is a left congruence
which is equivalent to the identity $(ab)^{+}=(ab^{+})^{+}$.
\end{defn}

Denote by $\PT_{X}$ the monoid of all partial functions on a set
$X$ - composing functions from right to left. Then $\PT_{X}^{\op}$
is the monoid with the same underlying set but composing functions
from left to right. Let $\psi:S\to\PT_{X}$ be a semigroup homomorphism.
We follow \cite[Definition 3.2]{Gould2009} and call $\psi$ an \emph{incomplete
left action} of $S$ on $X$. Equivalently, we say that $X$ is an
\emph{incomplete left $S$-action} (note that \emph{partial }action
usually has a different meaning - for instance, see \cite[Definition 2.2]{Kudryavtseva2023}).
Another convention is to write $s\bullet x$ instead of $\psi(s)(x)$
for $s\in S$ and $x\in X$. Note that $s\bullet x$ might be undefined.
Of course, we could have added a symbol $0$ and talk about actions
of $S$ on $X\cup\{0\}$ such that $s\bullet0=s$ for every $s\in S$.
However, we prefer not to add $0$ at this point.

It is known \cite[Lemma 3.4]{stein2022} that every $\Lc$-class $L$
is an incomplete left $S$-action according to 
\[
s\bullet x=\begin{cases}
sx & sx\in L\\
\text{undefined} & \text{otherwise}
\end{cases}
\]
for every $x\in L$ and $s\in S$. The same is true if we replace
$\Lc$-class by $\Lt$-class \cite[Lemma 5.1]{Margolis2021}. Assume
that $X,Y$ are incomplete left $S$-actions. A partial function $f:X\to Y$
is a \emph{partial} \emph{homomorphism of incomplete left $S$-actions}
if for every $x\in X$ and $s\in S$, $f(s\bullet x)$ is defined
if and only if $s\bullet f(x)$ is defined and in this case 
\[
f(s\bullet x)=s\bullet f(x).
\]

There is a dual notion of course. An\emph{ incomplete right action}
of $S$ on a set $X$ is a homomorphism $\mbox{\ensuremath{\psi:S\to\PT_{X}^{\op}}}$.
Equivalently, we say that $X$ is an \emph{incomplete right $S$-action}.
Partial homomorphisms of incomplete right $S$-actions are defined
in the obvious dual way.

We can regard any graph $\mathcal{C}$ as set of objects, denoted
$\mathcal{C}^{0}$, and a set of morphisms, denoted $\mathcal{C}^{1}$.
It is equipped with two functions $\db,\rb:\mathcal{C}^{1}\to\mathcal{C}^{0}$
associating every morphism $m\in\mathcal{C}^{1}$ with its \emph{domain
$\db(m)$ }and its \emph{range} $\rb(m)$. The set of morphisms with
domain $e$ and range $f$ (also called an \emph{hom-set}) is denoted
$\mathcal{C}(e,f)$. Since any category has an underlying graph, the
above notations hold also for categories. We compose morphisms in
a category from right to left so $m^{\prime}m$ is defined in a category
$\mathcal{C}$ if $\db(m^{\prime})=\rb(m)$.

A category $C$ is called a \emph{category with zero morphisms} if
for every $x,y\in\mathcal{C}^{0}$ there exists a zero morphism $0_{y,x}\in\mathcal{C}(x,y)$
and for every three objects $e,f,g\in\mathcal{C}^{0}$ and morphisms
$m^{\prime}\in\mathcal{C}(g,f)$, $m\in\mathcal{C}(e,g)$ we have
\[
m^{\prime}0_{g,e}=0_{f,e}=0_{f,g}m.
\]
Equivalently, a category with zero morphisms is a category enriched
over the category of pointed sets.

\subsection{Algebras}

Let $\Bbbk$ be a unital commutative ring and let $S$ be a semigroup.
The\emph{ semigroup algebra} $\mathbb{\Bbbk}S$ is defined in the
following way. It is a free $\mathbb{\Bbbk}$-module with basis the
elements of $S$, that is, it consists of all formal linear combinations
\[
\{k_{1}s_{1}+\ldots+k_{n}s_{n}\mid k_{i}\in\mathbb{\Bbbk},\,s_{i}\in S\}.
\]
The multiplication in $\mathbb{\Bbbk}S$ is the linear extension of
the semigroup multiplication. Note that in general $\Bbbk S$ might
not possess a unit element. 

The \emph{category algebra} $\mathbb{\Bbbk}\mathcal{C}$ of a (small)
category $\mathcal{C}$ is defined in the following way. It is a free
$\mathbb{\Bbbk}$-module with basis the morphisms of $\mathcal{C}$,
that is, it consists of all formal linear combinations
\[
\{k_{1}m_{1}+\ldots+k_{n}m_{n}\mid k_{i}\in\mathbb{\Bbbk},\,m_{i}\in\mathcal{C}^{1}\}.
\]
The multiplication in $\mathbb{\Bbbk}\mathcal{C}$ is the linear extension
of the following:
\[
m^{\prime}\cdot m=\begin{cases}
m^{\prime}m & \text{if \ensuremath{m^{\prime}m} is defined}\\
0 & \text{otherwise}.
\end{cases}
\]

If $\mathcal{C}$ has a finite number of objects then the unit element
of $\Bbbk\mathcal{C}$ is${\displaystyle \sum_{e\in\mathcal{C}^{0}}1_{e}}$
where $1_{e}$ is the identity morphism of the object $e$ of $\mathcal{C}$.

Let $\mathcal{C}$ be a category with zero morphisms and denote by
$Z$ the ideal of $\Bbbk\mathcal{C}$ generated by the zero morphisms.
The \emph{contracted category algebra }of $\mathcal{C}$ is defined
by $\Bbbk_{0}\mathcal{C}=\Bbbk\mathcal{C}/Z$. In a contracted category
algebra we identify all the zero morphisms with the zero element of
the algebra.

Let $A$ be a finite dimensional and unital $\Bbbk$-algebra. Let
$E=\{e_{1},\ldots,e_{n}\}$ be a set of idempotents from $A$. Recall
that $E$ is a complete set of orthogonal idempotents if ${\displaystyle \sum_{i=1}^{n}e_{i}=1_{A}}$
and $e_{i}e_{j}=0$ if $i\neq j$. If, in addition, the idempotents
of $E$ are \emph{central} (i.e. $ea=ae$ for every $e\in E$ and
$a\in A$) then $A\simeq{\displaystyle \prod_{i=1}^{n}e_{i}Ae_{i}}$
is an isomorphism of $\Bbbk$-algebras. 

Two $\Bbbk$-algebras $A$ and $B$ are \emph{Morita equivalent }if
the category of $A$-modules is equivalent to the category of $B$-modules.

\textcolor{black}{Assume that $\Bbbk$ is a field and recall that
a $\Bbbk$-algebra }$A$\textcolor{black}{{} is called }\textcolor{black}{\emph{semisimple}}\textcolor{black}{{}
if any }$A$\textcolor{black}{-module is semisimple (= a direct sum
of simple modules). We denote by }$\Rad A$\textcolor{black}{{} the
}\textcolor{black}{\emph{radical}}\textcolor{black}{{} of }$A$\textcolor{black}{,
which is the minimal ideal such that }$A/\Rad A$\textcolor{black}{{}
is a semisimple algebra. The radical }$\Rad A$\textcolor{black}{{}
is also the only nilpotent ideal }$I$\textcolor{black}{{} of }$A$\textcolor{black}{{}
with the property that }$A/I$\textcolor{black}{{} is a semisimple }$\Bbbk$\textcolor{black}{-algebra.}

\section{\label{sec:Isomorphism}An isomorphism theorem and the generalized
right ample condition}
\begin{defn}
\label{def:Generalized_right_ample_id}Let $S$ be a reduced $E$-Fountain
semigroup. We say that $S$ satisfies the \emph{generalized right
ample condition} (or the \emph{generalized right ample identity})
if 
\[
(b(a(ba)^{\ast})^{+})^{\ast}=(a(ba)^{\ast})^{+}
\]
for every $a,b\in S$.
\end{defn}

\begin{rem}
This identity is given in \cite{stein2022} in the form $(b^{\ast}(a(b^{\ast}a)^{\ast})^{+})^{\ast}=(a(b^{\ast}a)^{\ast})^{+}$
for $a,b\in S$ (or $(e(a(ea)^{\ast})^{+})^{\ast}=(a(ea)^{\ast})^{+}$
for $e\in E$ and $a\in S$). This is equivalent to the identity in
\defref{Generalized_right_ample_id} if we assume the right congruence
condition. In this paper we do not assume the right congruence condition
so we are forced to take a slightly different form of the identity.
\end{rem}

Let $S$ be a reduced $E$-Fountain semigroup. We denote the $\Lt$-class
of $e\in E$ by $\Lt(e)$. We have already mentioned in the preliminaries
that for every $e\in E$, the set $\Lt(e)$ is an incomplete $S$-action
according to
\[
s\bullet x=\begin{cases}
sx & sx\in\Lt(e)\\
\text{undefined} & \text{otherwise}
\end{cases}
\]

for $s\in S$ and $x\in\Lt(e)$. 

For every $a\in S$ we can define a partial function $r_{a}:\Lt(a^{+})\to\Lt(a^{\ast})$
by 
\[
r_{a}(b)=\begin{cases}
ba & (ba)^{\ast}=a^{\ast}\\
\text{undefined} & \text{otherwise}
\end{cases}
\]
where $b\in\Lt(a^{+})$.

The following proposition strengthens \cite[Theorem 3.6]{stein2022}.
\begin{prop}
\label{prop:Equivalent_characterizations_of_gra}Let $S$ be a reduced
$E$-Fountain semigroup. The following conditions are equivalent.
\begin{enumerate}
\item The semigroup $S$ satisfies the generalized right ample condition.
\item The semigroup $S$ satisfies $(ba)^{\ast}=a^{\ast}\implies(ba^{+})^{\ast}=a^{+}$
for every $a,b\in S$.
\item The partial function $r_{a}:\Lt(a^{+})\to\Lt(a^{\ast})$ is a partial
homomorphism of incomplete left $S$-actions for every $a\in S$.
\end{enumerate}
\end{prop}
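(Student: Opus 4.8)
The plan is to establish the cycle of implications $(1)\Rightarrow(2)$, $(2)\Rightarrow(1)$, $(2)\Rightarrow(3)$ and $(3)\Rightarrow(2)$, which gives the full equivalence. Two of these are immediate and two carry the content. For $(1)\Rightarrow(2)$, I would simply note that if $(ba)^\ast=a^\ast$ then $a(ba)^\ast=aa^\ast=a$, so the generalized right ample identity of \defref{Generalized_right_ample_id} collapses directly to $(ba^+)^\ast=a^+$.

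For $(2)\Rightarrow(1)$ the engine is an auxiliary identity that holds in \emph{every} reduced $E$-Fountain semigroup, namely $(a(ba)^\ast)^\ast=(ba)^\ast$. I would prove it by writing $e=(ba)^\ast$ and showing $(ae)^\ast=e$ through a two-sided comparison: since $ae\cdot e=ae$, the idempotent $e$ is a right identity of $ae$, giving $(ae)^\ast\le e$; conversely, setting $f=(ae)^\ast$ and using $bae=ba$, one gets $baf=b(aef)=b(ae)=ba$, so $f$ is a right identity of $ba$ and hence $e=(ba)^\ast\le f=(ae)^\ast$. Granting this, I put $c=a(ba)^\ast$, observe $bc=ba$ so that $(bc)^\ast=(ba)^\ast=c^\ast$ by the auxiliary identity, and apply condition (2) with $a$ replaced by $c$ to obtain $(bc^+)^\ast=c^+$, which is precisely the generalized right ample identity.

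For the equivalence of (2) and (3), I would first unwind the partial-homomorphism condition. For $b\in\Lt(a^+)$ and $s\in S$, both $r_a(s\bullet b)$ and $s\bullet r_a(b)$ equal $sba$ whenever they are defined, so the only content is a ``defined if and only if defined'' statement; concretely it reduces to the equivalence, for all such $b,s$, of $(sb)^\ast=a^+\wedge(sba)^\ast=a^\ast$ with $(ba)^\ast=a^\ast\wedge(sba)^\ast=a^\ast$. One direction is free from \lemref{AstPlusLemma} alone, since $(sba)^\ast\le(ba)^\ast\le a^\ast$ forces $(ba)^\ast=a^\ast$ as soon as $(sba)^\ast=a^\ast$. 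The main obstacle is the reverse direction, where condition (2) is genuinely used: I must deduce $(sb)^\ast=a^+$ from $(sba)^\ast=a^\ast$. The trick I would use is to set $g=(sb)^\ast$; since $g$ is a right identity of $sb$ one can factor $sba=sb\cdot(ga)$, so \lemref{AstPlusLemma} gives $a^\ast=(sba)^\ast\le(ga)^\ast\le a^\ast$, forcing $(ga)^\ast=a^\ast$. Now applying condition (2) with $b$ replaced by the idempotent $g$ yields $(ga^+)^\ast=a^+$, and since $g\le b^\ast=a^+$ we have $ga^+=g$, whence $g=a^+$. This is the crux of $(2)\Rightarrow(3)$.

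Finally, for $(3)\Rightarrow(2)$ I would specialize the partial-homomorphism property to $x=a^+\in\Lt(a^+)$ and $s=b$. Here $r_a(a^+)=a^+a=a$ is always defined, so $s\bullet r_a(x)=b\bullet a$ is defined exactly when $(ba)^\ast=a^\ast$; condition (3) then forces $r_a(b\bullet a^+)$ to be defined, and in particular $b\bullet a^+$ must be defined, i.e.\ $(ba^+)^\ast=a^+$. This is exactly the implication (2). Throughout, I expect the only subtle points to be the auxiliary identity in Step 2 and the factoring $sba=sb\cdot(ga)$ in Step 3; everything else is bookkeeping with \lemref{AstPlusLemma} and the defining properties of $a^\ast$ and $a^+$.
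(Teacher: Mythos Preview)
Your proof is correct and follows essentially the same cycle of implications, the same auxiliary identity $(a(ba)^\ast)^\ast=(ba)^\ast$, and the same specialization $x=a^+$ for $(3)\Rightarrow(2)$ as the paper. The only deviation is in the hard direction of $(2)\Rightarrow(3)$: the paper applies condition~(2) directly to the pair $(sx,a)$, using $sx\,a^{+}=sx$ (since $x\in\Lt(a^{+})$ means $x^\ast=a^{+}$) to conclude $(sx)^\ast=a^{+}$ in one step, whereas your route through the idempotent $g=(sb)^\ast$ and the factoring $sba=sb\cdot(ga)$ is valid but slightly longer.
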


\begin{proof}
For $(1\Rightarrow2)$ substitute $(ba)^{\ast}=a^{\ast}$ in the generalized
right ample identity to obtain 
\[
\mbox{\ensuremath{(b(aa^{\ast})^{+})^{\ast}=(aa^{\ast})^{+}}}
\]
 so we get $(ba^{+})^{\ast}=a^{+}$ as required. For $(2\Rightarrow1)$,
assume that $(yx)^{\ast}=x^{\ast}$ implies $(yx^{+})^{\ast}=x^{+}$.
Note that by \lemref{AstPlusLemma}
\[
(ba)^{\ast}=(ba(ba)^{\ast})^{\ast}\leq(a(ba)^{\ast})^{\ast}\leq(ba)^{\ast}
\]
so $(a(ba)^{\ast})^{\ast}=(ba)^{\ast}$ and therefore, 
\[
(b(a(ba)^{\ast}))^{\ast}=(ba)^{\ast}=(a(ba)^{\ast})^{\ast}.
\]
Setting $y=b$ and $x=a(ba)^{\ast}$ implies that 
\[
(b(a(ba)^{\ast})^{+})^{\ast}=(a(ba)^{\ast})^{+}
\]
as required. For $(3\Rightarrow2)$ we assume that $(ba)^{\ast}=a^{\ast}$.
This means that $b\bullet r_{a}(a^{+})=ba\in\Lt(a^{\ast})$. Since
$r_{a}$ is a partial homomorphism of incomplete left $S$-actions
we know that $r_{a}(b\bullet a^{+})$ is also defined so in particular
$ba^{+}\in\Lt(a^{+})$ hence $(ba^{+})^{\ast}=a^{+}$. It is left
to prove $(2\Rightarrow3)$. Assume that $(ba)^{\ast}=a^{\ast}$ implies
$(ba^{+})^{\ast}=a^{+}$ and let $s,a\in S$ and $x\in\Lt(a^{+})$.
If $s\bullet r_{a}(x)$ is defined then $(xa)^{\ast}=a^{\ast}$ and
$(sxa)^{\ast}=a^{\ast}$. According to the assumption (with $b=sx$)
we find that $(sxa^{+})^{\ast}=a^{+}$ but $a^{+}=x^{\ast}$ so $(sx)^{\ast}=x^{\ast}$.
This shows that $sx\in\Lt(a^{+})$ and therefore $s\bullet x$ is
defined. Moreover $(sxa)^{\ast}=a^{\ast}$ so $r_{a}(s\bullet x)$
is defined. In the other direction if $r_{a}(s\bullet x)$ is defined
then $(sx)^{\ast}=x^{\ast}$ and $(sxa)^{\ast}=a^{\ast}$. Since $a^{\ast}=(sxa)^{\ast}\leq(xa)^{\ast}\leq a^{\ast}$
we see that $(xa)^{\ast}=a^{\ast}$ so $r_{a}(x)$ is defined and
$(sxa)^{\ast}=a^{\ast}$ shows that $s\bullet r_{a}(x)$ is defined
as well. This finally proves that $s\bullet r_{a}(x)$ is defined
if and only if $r_{a}(s\bullet x)$ is defined. In this case it is
clear that both equal $sxa$ so we are done.
\end{proof}
\begin{defn}
Let $S$ be a reduced $E$-Fountain semigroup. We associate with $S$
a graph $\mathcal{C}_{\bullet}(S)$ as follows: The objects correspond
to the set of idempotents $E$ and for every $a\in S$ we associate
a morphism $C(a)$ with $\db(C(a))=a^{\ast}$ and $\rb(C(a))=a^{+}$.
\end{defn}

It is tempting to define $C(b)C(a)=C(ba)$ whenever $\db(C(b))=\rb(C(a))$
(or equivalently, $b^{\ast}=a^{+}$) in hope that this will give us
a category. The problem is that we do not know if $b^{\ast}=a^{+}$
implies $\db(C(ba))=\db(C(a))$ and $\rb(C(ba))=\rb(C(b))$ (that
is, $(ba)^{\ast}=a^{\ast}$ and $(ba)^{+}=b^{+}$). We refer the reader
to \cite{Stokes2023} for a detailed study of the cases where we do
obtain a category in this way. What we can prove is that $(ba)^{\ast}=a^{\ast}$
implies $(ba)^{+}=b^{+}$ if $S$ satisfies the generalized right
ample condition.
\begin{lem}
\label{lem:Ast_implies_plus}Let $S$ be a reduced $E$-Fountain semigroup
which satisfies the generalized right ample condition. If $a,b\in S$
are such that $b^{\ast}=a^{+}$ and $(ba)^{\ast}=a^{\ast}$ then $(ba)^{+}=b^{+}$.
\end{lem}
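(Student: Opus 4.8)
The plan is to prove the two inequalities $(ba)^{+}\leq b^{+}$ and $b^{+}\leq(ba)^{+}$ separately; since both are idempotents of $E$, this forces $(ba)^{+}=b^{+}$. The first inequality is immediate from \lemref{AstPlusLemma}, so the entire content lies in the reverse inequality $b^{+}\leq(ba)^{+}$. I would stress at the outset that this reverse inequality is equivalent to the single equation $(ba)^{+}b=b$, and therefore cannot be extracted from the order-theoretic facts alone: it is exactly here that the generalized right ample condition must enter.

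My first move is to analyse the element $c:=(ba)^{+}b$ and to show $c^{\ast}=b^{\ast}$. Because $(ba)^{+}$ is a left identity of $ba$, we have $(ba)^{+}ba=ba$ and hence $(ca)^{\ast}=(ba)^{\ast}=a^{\ast}$. I then invoke the equivalent ``implication'' form of the hypothesis from \propref{Equivalent_characterizations_of_gra}(2), applied with its right variable equal to $a$ and its left variable equal to $c=(ba)^{+}b$: from $(ca)^{\ast}=a^{\ast}$ it yields $(ca^{+})^{\ast}=a^{+}$. Since $b^{\ast}=a^{+}$ gives $ba^{+}=b$, the element $ca^{+}=(ba)^{+}ba^{+}=(ba)^{+}b=c$ is unchanged, so this reads $c^{\ast}=a^{+}=b^{\ast}$, as wanted.

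The decisive step --- and the one I expect to be the genuine obstacle, since it amounts to guessing the correct substitution --- is to feed $c^{\ast}=b^{\ast}$ into the generalized right ample identity of \defref{Generalized_right_ample_id} itself. Writing that identity with dummy variables as $(x(y(xy)^{\ast})^{+})^{\ast}=(y(xy)^{\ast})^{+}$, I substitute $x=(ba)^{+}$ and $y=b$. Then $xy=(ba)^{+}b=c$, so $(xy)^{\ast}=c^{\ast}=b^{\ast}$ by the previous step; consequently $y(xy)^{\ast}=bb^{\ast}=b$, and the right-hand side of the identity collapses to $b^{+}$. On the left-hand side the same collapse leaves $((ba)^{+}b^{+})^{\ast}$, and since $(ba)^{+}\leq b^{+}$ we have $(ba)^{+}b^{+}=(ba)^{+}$, so the left-hand side equals $((ba)^{+})^{\ast}=(ba)^{+}$ (an idempotent of $E$ is its own $\ast$). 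The identity therefore states precisely $(ba)^{+}=b^{+}$.

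Thus the proof invokes the generalized right ample hypothesis twice: once in implication form to establish $((ba)^{+}b)^{\ast}=b^{\ast}$, and once in identity form under the substitution $(x,y)=((ba)^{+},b)$ to convert this $\ast$-equality into the required $+$-equality. Everything apart from selecting these substitutions is mechanical simplification using $b^{\ast}=a^{+}$, $bb^{\ast}=b$, $(ba)^{+}\leq b^{+}$, and $e^{\ast}=e$ for $e\in E$.
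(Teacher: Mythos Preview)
Your proof is correct and follows essentially the same route as the paper: both set $c=(ba)^{+}b$, apply the generalized right ample hypothesis once (with the pair $((ba)^{+}b,\,a)$) to obtain $c^{\ast}=b^{\ast}$, and then apply it a second time (with the pair $((ba)^{+},\,b)$) to obtain $((ba)^{+}b^{+})^{\ast}=b^{+}$, after which the conclusion follows from $(ba)^{+}\leq b^{+}$. The only cosmetic difference is that for the second application you plug directly into the identity of \defref{Generalized_right_ample_id}, whereas the paper phrases both applications via the implication form of \propref{Equivalent_characterizations_of_gra}(2).
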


\begin{proof}
First observe that
\[
((ba)^{+}ba)^{\ast}=(ba)^{\ast}=a^{\ast}.
\]
The generalized right ample condition implies that 
\[
((ba)^{+}b)^{\ast}=((ba)^{+}bb^{\ast})^{\ast}=((ba)^{+}ba^{+})^{\ast}=a^{+}=b^{\ast}.
\]
Another use of the generalized right ample condition will give 
\[
((ba)^{+}b^{+})^{\ast}=b^{+}.
\]
By \lemref{AstPlusLemma}, $(ba)^{+}\leq b^{+}$ so 
\[
(ba)^{+}=((ba)^{+})^{\ast}=((ba)^{+}b^{+})^{\ast}=b^{+}
\]
as required.
\end{proof}
\begin{cor}
\label{cor:The_right_congruence_case}Let $S$ be a reduced $E$-Fountain
semigroup which satisfies the right congruence identity and the generalized
right ample condition. Then, defining $C(b)\cdot C(a)=C(ba)$ when
$b^{\ast}=a^{+}$ endows the graph $\mathcal{C}_{\bullet}(S)$ with
a category structure.
\end{cor}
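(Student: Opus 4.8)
The plan is to verify the three defining properties of a category for $\mathcal{C}_{\bullet}(S)$ under the given composition $C(b)\cdot C(a)=C(ba)$ (defined when $b^{\ast}=a^{+}$): that the product lands in the correct hom-set, so the graph is closed under composition; that every object carries an identity morphism; and that composition is associative.

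First I would check \emph{well-definedness}. Suppose $b^{\ast}=a^{+}$, so that $C(b)\cdot C(a)=C(ba)$ is declared. By definition $C(ba)$ is a morphism from $(ba)^{\ast}$ to $(ba)^{+}$, so for this to be a morphism from $\db(C(a))=a^{\ast}$ to $\rb(C(b))=b^{+}$ I must establish $(ba)^{\ast}=a^{\ast}$ and $(ba)^{+}=b^{+}$. For the domain, the right congruence identity $(xy)^{\ast}=(x^{\ast}y)^{\ast}$ gives $(ba)^{\ast}=(b^{\ast}a)^{\ast}=(a^{+}a)^{\ast}=a^{\ast}$, using $b^{\ast}=a^{+}$ and $a^{+}a=a$. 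With $(ba)^{\ast}=a^{\ast}$ now in hand, \lemref{Ast_implies_plus} (which is exactly where the generalized right ample condition is consumed) yields $(ba)^{+}=b^{+}$. Hence $C(ba)\in\mathcal{C}_{\bullet}(S)(a^{\ast},b^{+})$, as required.

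Next I would produce the identities. For an object $e\in E$ the candidate is $C(e)$; since $e$ is an idempotent of $E$ we have $e^{\ast}=e^{+}=e$, so $C(e)$ is an endomorphism of $e$. If $a^{+}=e$, then $C(e)\cdot C(a)=C(ea)=C(a)$ because $ea=a$, and dually if $a^{\ast}=e$ then $C(a)\cdot C(e)=C(ae)=C(a)$ because $ae=a$. Thus $C(e)$ serves as the identity $1_{e}$. Finally, associativity reduces to associativity in $S$: assuming $c^{\ast}=b^{+}$ and $b^{\ast}=a^{+}$, the well-definedness already proved shows that $C(b)\cdot C(a)=C(ba)$ has range $b^{+}=c^{\ast}$ and $C(c)\cdot C(b)=C(cb)$ has domain $b^{\ast}=a^{+}$, so both $C(c)\cdot(C(b)\cdot C(a))$ and $(C(c)\cdot C(b))\cdot C(a)$ are again composable, and each equals $C(cba)$ by associativity of multiplication in $S$.

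The only step that is not pure bookkeeping is the equality of ranges $(ba)^{+}=b^{+}$, and this is precisely the content of \lemref{Ast_implies_plus}; the domain equality is a one-line use of the right congruence identity, and the identity and associativity axioms then follow formally from the $\ast$- and $+$-operations together with associativity in $S$. I therefore expect the main (and essentially only) obstacle to be exactly the range computation, which is why the generalized right ample condition is needed here beyond the right congruence identity.
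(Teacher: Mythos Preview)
Your proof is correct and follows essentially the same approach as the paper. The paper's proof is terser---it establishes $(ba)^{\ast}=a^{\ast}$ via the right congruence identity exactly as you do, invokes \lemref{Ast_implies_plus} for $(ba)^{+}=b^{+}$, and then simply says ``the rest follows,'' leaving the identity and associativity axioms implicit; you have spelled these out explicitly, but the substance is identical.
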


\begin{proof}
If $b^{\ast}=a^{+}$ then the right congruence condition implies $(ba)^{\ast}=(b^{\ast}a)^{\ast}=(a^{+}a)^{\ast}=a^{\ast}$.
The rest follows from \lemref{Ast_implies_plus}.
\end{proof}
If $S$ is not right congruence then $\mathcal{C}_{\bullet}(S)$ is
not a category in general. What we can say is that if the generalized
right ample condition holds then we can add zero morphisms and make
it a category with zero morphisms. We define a new graph $\mathcal{C}(S)$
by adding zero morphisms. For every $e,f\in\mathcal{C}_{\bullet}^{0}(S)$
a add a morphism $0_{f,e}$ with domain $e$ and range $f$. It will
be convenient to drop the subscript and write just $0$ since the
domain and range can be understood from the context. Note that only
the non-zero morphisms of $\mathcal{C}(S)$ are in one-to-one correspondence
with elements of $S$. 
\begin{defn}
\label{def:Definition_of_composition}Define composition in $C(S)$
as follows: For two morphisms $m^{\prime}$ and $m$, with $\mbox{\ensuremath{\db(m^{\prime})=\rb(m)}}$,
we define $m^{\prime}\cdot m=0$ if at least one of $m^{\prime}$
or $m$ is a zero morphism. Otherwise, we can write $m^{\prime}=C(b)$
and $m=C(a)$ where $b,a\in S$ with $b^{\ast}=a^{+}$ and in this
case: 
\[
C(b)\cdot C(a)=\begin{cases}
C(ba) & (ba)^{\ast}=a^{\ast}\wedge(ba)^{+}=b^{+}\\
0 & \text{otherwise.}
\end{cases}
\]
\end{defn}

\begin{lem}
\label{lem:Category_with_partial_composition}Let $S$ be a reduced
$E$-Fountain semigroup which satisfies the generalized right ample
condition, then $\mathcal{C}(S)$ with composition as defined in \defref{Definition_of_composition}
is a category (with zero morphisms).
\end{lem}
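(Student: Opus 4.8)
The plan is to verify the axioms of a category with zero morphisms one at a time, treating well-definedness, identities, and the zero-morphism relations as routine and concentrating the effort on associativity. For well-definedness, note that when $C(b)\cdot C(a)$ is non-zero it equals $C(ba)$ precisely when $(ba)^{\ast}=a^{\ast}$ and $(ba)^{+}=b^{+}$, and these two equalities say exactly that $\db(C(ba))=a^{\ast}=\db(C(a))$ and $\rb(C(ba))=b^{+}=\rb(C(b))$, so the product is a legitimate morphism from $a^{\ast}$ to $b^{+}$. The identity at an object $e\in E$ will be $C(e)$: since $e^{\ast}=e^{+}=e$, whenever $a^{+}=e$ we have $ea=a$, hence $(ea)^{\ast}=a^{\ast}$ and $C(e)\cdot C(a)=C(a)$, and dually $be=b$ gives $C(b)\cdot C(e)=C(b)$ when $b^{\ast}=e$. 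The zero-morphism relations $m'\cdot 0 = 0 = 0\cdot m$ hold immediately from \defref{Definition_of_composition}, since any product involving a zero morphism is declared to be $0$.

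The key simplification, which drives the whole argument, is that under the generalized right ample condition the defining clause of \defref{Definition_of_composition} collapses to a single equality. Indeed, by \lemref{Ast_implies_plus}, whenever $b^{\ast}=a^{+}$ the condition $(ba)^{\ast}=a^{\ast}$ already forces $(ba)^{+}=b^{+}$; consequently $C(b)\cdot C(a)=C(ba)$ if $(ba)^{\ast}=a^{\ast}$ and $C(b)\cdot C(a)=0$ otherwise. I will record this reformulation first, as it removes the $+$-condition from every subsequent case analysis.

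For associativity I take non-zero morphisms $C(c),C(b),C(a)$ with $c^{\ast}=b^{+}$ and $b^{\ast}=a^{+}$ and compare $(C(c)\cdot C(b))\cdot C(a)$ with $C(c)\cdot(C(b)\cdot C(a))$. Using the reformulation, each bracketing yields either $C(cba)$ or $0$: the left product is $C(cba)$ iff $(cb)^{\ast}=b^{\ast}$ and $(cba)^{\ast}=a^{\ast}$, while the right product is $C(cba)$ iff $(ba)^{\ast}=a^{\ast}$ and $(cba)^{\ast}=a^{\ast}$. Writing $R$ for the common condition $(cba)^{\ast}=a^{\ast}$, it therefore suffices to show that $R$ already implies both $(cb)^{\ast}=b^{\ast}$ and $(ba)^{\ast}=a^{\ast}$, for then both products are non-zero exactly when $R$ holds and both equal $C(cba)$. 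The implication $R\Rightarrow (ba)^{\ast}=a^{\ast}$ is the easy half: \lemref{AstPlusLemma} gives the sandwich $a^{\ast}=(cba)^{\ast}\le(ba)^{\ast}\le a^{\ast}$, forcing equality.

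The main obstacle, and the only place where the hypothesis is genuinely used, is the implication $R\Rightarrow (cb)^{\ast}=b^{\ast}$. Here I will invoke the generalized right ample condition in the form of \propref{Equivalent_characterizations_of_gra}(2): applying $(yx)^{\ast}=x^{\ast}\Rightarrow(yx^{+})^{\ast}=x^{+}$ with $y=cb$ and $x=a$, the hypothesis $R$ gives $(cb\,a^{+})^{\ast}=a^{+}$; since $a^{+}=b^{\ast}$ and $cb\,b^{\ast}=cb$, this reads $(cb)^{\ast}=b^{\ast}$. With both implications in hand, associativity follows, completing the verification that $\mathcal{C}(S)$ is a category with zero morphisms.
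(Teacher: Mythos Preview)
Your proof is correct and follows essentially the same approach as the paper: both arguments establish associativity via the sandwich $a^{\ast}=(cba)^{\ast}\le(ba)^{\ast}\le a^{\ast}$ from \lemref{AstPlusLemma} and the implication $(cba)^{\ast}=a^{\ast}\Rightarrow(cb)^{\ast}=b^{\ast}$ from the generalized right ample condition. Your version is slightly more streamlined in that you invoke \lemref{Ast_implies_plus} once at the outset to reduce the composition rule to the single condition $(ba)^{\ast}=a^{\ast}$, and then show both bracketings are non-zero iff the single condition $R:\,(cba)^{\ast}=a^{\ast}$ holds; the paper instead carries both the $\ast$- and $+$-conditions and checks the two implications (left non-zero $\Rightarrow$ right non-zero and conversely) separately, using \lemref{Ast_implies_plus} only in one direction.
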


\begin{proof}
Let $m$ be a morphism with $\rb(m)=e\in E$. If $m$ is a zero morphism
then clearly $C(e)\cdot m=m$. Otherwise $m=C(a)$ with $a^{+}=e$.
Since $(ea)^{\ast}=a^{\ast}$ and $(ea)^{+}=a^{+}=e$ we have 
\[
C(e)\cdot C(a)=C(ea)=C(a).
\]
Proving $m\cdot C(e)=m$ if $\db(m)=e$ is dual so morphisms of the
form $C(e)$ for $e\in E$ are identity morphisms. It is left to check
associativity. Let $m^{\prime\prime},m^{\prime},m$ be three morphisms
such that $\db(m^{\prime\prime})=\rb(m^{\prime})$ and $\db(m^{\prime})=\rb(m)$.
Clearly, if at least one of them is a zero morphism then 
\[
(m^{\prime\prime}\cdot m^{\prime})\cdot m=0=m^{\prime\prime}\cdot(m^{\prime}\cdot m).
\]
So it is left to handle the case $m^{\prime\prime}=C(c),\quad m^{\prime}=C(b),\quad m=C(a)$
where $a,b,c\in S$ are such that 
\[
c^{\ast}=b^{+},\quad b^{\ast}=a^{+}
\]
and we need to prove that 
\[
\left(C(c)\cdot C(b)\right)\cdot C(a)=C(c)\cdot\left(C(b)\cdot C(a)\right).
\]

If both sides are non-zero, it is clear that both equal $C(cba)$.
We need to show that the right hand side is non-zero if and only if
the left hand side is non-zero. First assume that the right hand side
is non-zero and equals $C(cba)$. This implies 
\[
(cba)^{\ast}=a^{\ast},\quad(cba)^{+}=c^{+}.
\]

We need to prove $C(c)\cdot C(b)=C(cb)$ and then $C(cb)\cdot C(a)=C(cba)$.
We know that $(cba)^{+}\leq(cb)^{+}\leq c^{+}$ but the assumption
is $(cba)^{+}=c^{+}$ so $(cb)^{+}=c^{+}$. The equality $(cba)^{\ast}=a^{\ast}$
implies by the generalized right ample condition that $(cb)^{\ast}=(cbb^{\ast})^{\ast}=(cba^{+})^{\ast}=a^{+}=b^{\ast}$
so $C(c)\cdot C(b)=C(cb)$. We know that $(cba)^{\ast}=a^{\ast}$,
$(cba)^{+}=c^{+}$ so $C(cb)\cdot C(a)=C(cba)$. This finishes this
side.

In the other direction assume that the left hand side is non-zero
and 
\[
\left(C(c)\cdot C(b)\right)\cdot C(a)=C(cba).
\]
This implies 
\[
(cba)^{\ast}=a^{\ast},\quad(cba)^{+}=c^{+}.
\]
We know that $(cba)^{\ast}\leq(ba)^{\ast}\leq a^{\ast}$ and $(cba)^{\ast}=a^{\ast}$
so this shows $(ba)^{\ast}=a^{\ast}$. According to \lemref{Ast_implies_plus}
we deduce that $(ba)^{+}=b^{+}$ as well. So $C(b)\cdot C(a)=C(ba)$.
We know that $(cba)^{\ast}=a^{\ast}$, $(cba)^{+}=c^{+}$ so $C(c)\cdot C(ba)=C(cba)$.
This finishes the proof.
\end{proof}
\begin{rem}
\label{rem:Simplify_the_category}Note that if the right congruence
identity holds, then composition of non-zero morphisms in $\mathcal{C}(S)$
coincide with the category composition from \corref{The_right_congruence_case}.
In this case, we can strip the zero morphisms from $\mathcal{C}(S)$
and obtain the category $\mathcal{C}_{\bullet}(S)$. In particular,
in this case $\Bbbk\mathcal{C}_{\bullet}(S)\simeq\Bbbk_{0}\mathcal{C}(S)$
for every commutative unital ring $\Bbbk$.
\end{rem}

We say that a relation $R$ on $S$ is \emph{principally finite} if
for every $a\in S$ the set $\{c\in S\mid c\,R\,a\}$ is finite. Define
a relation $\trianglelefteq_{l}$ on $S$ by the rule that $a\trianglelefteq_{l}b$
if and only if $a=be$ for some $e\in E$. It is proved in \cite[Lemma 3.5]{Stein2021}
that if $S$ is a reduced $E$-Fountain semigroup then $a\trianglelefteq_{l}b\iff a=ba^{\ast}$
and it is shown that the relation $\trianglelefteq_{l}$ is reflexive
but in general it is not anti-symmetric nor transitive. However, the
relation $\trianglelefteq_{l}$ is a partial order if $E$ is a commutative
subsemigroup of $S$ \cite[Section 3.2]{Gould2010b}.

Let $\Bbbk$ be a commutative unital ring. If we assume that $\mathcal{C}(S)$
is a category and the relation $\trianglelefteq_{l}$ is principally
finite we can define a function $\varphi:\Bbbk S\to\Bbbk_{0}\mathcal{C}(S)$
on basis elements by

\[
\varphi(a)=\sum_{c\trianglelefteq_{l}a}C(c).
\]

It is clear that $\varphi$ is a $\Bbbk$-module homomorphism.
\begin{thm}
Let $S$ be a reduced $E$-Fountain semigroup and assume that $\trianglelefteq_{l}$
is principally finite. Then, the generalized right ample condition
holds if and only if $\mathcal{C}(S)$ is a category and the $\Bbbk$-module
homomorphism $\varphi:\Bbbk S\to\Bbbk_{0}\mathcal{C}(S)$ is a homomorphism
of $\Bbbk$-algebras.
\end{thm}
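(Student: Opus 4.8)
The plan is to reduce both directions to one computation of $\varphi(a)\varphi(b)$ in $\Bbbk_{0}\mathcal{C}(S)$, a free module with basis the non-zero morphisms $\{C(g):g\in S\}$. Expanding gives $\varphi(a)\varphi(b)=\sum_{c\,\trianglelefteq_{l}\,a}\sum_{d\,\trianglelefteq_{l}\,b}C(c)\cdot C(d)$, and I would first isolate three facts needing no hypothesis beyond $S$ being reduced $E$-Fountain. (i) If $C(c)\cdot C(d)$ is non-zero then, writing $c=ac^{\ast}$ and $d=bd^{\ast}$ and using $c^{\ast}=d^{+}$ with $d^{+}d=d$, one gets $cd=ad=abd^{\ast}$; since non-vanishing forces $(cd)^{\ast}=d^{\ast}$ by \defref{Definition_of_composition}, this gives $cd=ab(cd)^{\ast}$, i.e. $cd\,\trianglelefteq_{l}\,ab$. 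So every non-zero term already appears among the basis vectors of $\varphi(ab)=\sum_{f\,\trianglelefteq_{l}\,ab}C(f)$. (ii) For fixed $f\,\trianglelefteq_{l}\,ab$, any non-zero term with $cd=f$ has $d^{\ast}=(cd)^{\ast}=f^{\ast}$, hence $d=bd^{\ast}=bf^{\ast}$ and $c=ac^{\ast}=ad^{+}$, so at most one pair can contribute $C(f)$. (iii) This candidate pair $d=bf^{\ast}$, $c=ad^{+}$ always satisfies $cd=f$ and $d\,\trianglelefteq_{l}\,b$, because $f=abf^{\ast}$ gives $f^{\ast}=(abf^{\ast})^{\ast}\le(bf^{\ast})^{\ast}\le f^{\ast}$ by \lemref{AstPlusLemma}, so $(bf^{\ast})^{\ast}=f^{\ast}=d^{\ast}$.

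With the candidate pair isolated, the whole question is whether it yields a non-zero morphism. By \defref{Definition_of_composition} this needs $c^{\ast}=d^{+}$, $(cd)^{\ast}=d^{\ast}$ and $(cd)^{+}=c^{+}$; the second holds automatically since $(cd)^{\ast}=f^{\ast}=d^{\ast}$, and \lemref{Ast_implies_plus} then gives the third for free once the first holds. Hence the only condition that can fail is composability $c^{\ast}=d^{+}$, that is $(ad^{+})^{\ast}=d^{+}$ with $d=bf^{\ast}$. I would record the clean conclusion: $\varphi(a)\varphi(b)=\varphi(ab)$ holds for all $a,b$ if and only if $(ad^{+})^{\ast}=d^{+}$ whenever $d=bf^{\ast}$ for some $f\,\trianglelefteq_{l}\,ab$. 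This equality is the crux and the one place where the generalized right ample condition enters.

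For the forward implication I assume the generalized right ample condition, so $\mathcal{C}(S)$ is a category by \lemref{Category_with_partial_composition} and $\Bbbk_{0}\mathcal{C}(S)$ is an associative algebra. For the candidate pair, $ad=a(bf^{\ast})=abf^{\ast}=f$ gives $(ad)^{\ast}=f^{\ast}=d^{\ast}$, so characterization (2) of \propref{Equivalent_characterizations_of_gra}, which reads $(vu)^{\ast}=u^{\ast}\Rightarrow(vu^{+})^{\ast}=u^{+}$, applied with $v=a$ and $u=d$ yields $(ad^{+})^{\ast}=d^{+}$. Thus each $f\,\trianglelefteq_{l}\,ab$ is hit exactly once, so $\varphi(a)\varphi(b)=\sum_{f\,\trianglelefteq_{l}\,ab}C(f)=\varphi(ab)$; as $\varphi$ is already a $\Bbbk$-module map, checking this on the basis $S$ shows $\varphi$ is an algebra homomorphism.

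For the converse I assume $\mathcal{C}(S)$ is a category and $\varphi$ is an algebra homomorphism, and I derive characterization (2), equivalent to the generalized right ample condition by \propref{Equivalent_characterizations_of_gra}. Suppose $(yx)^{\ast}=x^{\ast}$; I specialize the analysis to $a=y$, $b=x$ and $f=yx$, noting that $ab=yx$, that $f\,\trianglelefteq_{l}\,ab$ by reflexivity, and that $f^{\ast}=x^{\ast}$. Then the candidate pair is $d=bf^{\ast}=xx^{\ast}=x$ and $c=ad^{+}=yx^{+}$, the unique pair by (ii) that can contribute $C(f)$. Comparing the coefficient of $C(f)$ in $\varphi(y)\varphi(x)=\varphi(yx)$, which equals $1$ on the right, forces this term to be non-zero, hence $c^{\ast}=d^{+}$, i.e. $(yx^{+})^{\ast}=x^{+}$ --- precisely characterization (2). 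The main obstacle throughout is this composability equality: the forward containment (i) and the uniqueness (ii) are formal consequences of \lemref{AstPlusLemma} and \lemref{Ast_implies_plus}, whereas $(ad^{+})^{\ast}=d^{+}$ is exactly the generalized right ample condition in disguise, and it is what both implications pivot on.
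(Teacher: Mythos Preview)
Your proof is correct and follows the same approach as the paper's: you identify for each $f\trianglelefteq_{l}ab$ the unique candidate pair $(c,d)=(a(bf^{\ast})^{+},\,bf^{\ast})$, show via \propref{Equivalent_characterizations_of_gra}(2) that the generalized right ample condition makes this pair compose non-trivially, and read off the converse from the coefficient of $C(ab)$ --- this is exactly the paper's argument with the roles of $a$ and $b$ interchanged and the steps reorganised. One minor expository point: in your ``crux'' paragraph you invoke \lemref{Ast_implies_plus} (to reduce the three conditions for non-vanishing to the single condition $c^{\ast}=d^{+}$) before assuming the generalized right ample condition, but that lemma requires it; this does not affect correctness, since your forward direction assumes the condition before using the reduction, and your converse only uses the trivial implication that a non-zero product forces $c^{\ast}=d^{+}$.
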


\begin{proof}
First assume that the generalized right ample condition holds. Then
$\mathcal{C}(S)$ is a category by \lemref{Category_with_partial_composition}
and it is left to show that $\varphi$ is a homomorphism. For $a,b\in S$,
we need to show that $\varphi(ba)=\varphi(b)\varphi(a)$ so we need
to prove
\[
\sum_{c\trianglelefteq_{l}ba}C(c)=\sum_{c^{\prime\prime}\trianglelefteq_{l}b}C(c^{\prime\prime})\sum_{c^{\prime}\trianglelefteq_{l}a}C(c^{\prime}).
\]
First note that if $c^{\prime\prime}\trianglelefteq_{l}b$ and $c^{\prime}\trianglelefteq_{l}a$
and $C(c^{\prime\prime})C(c^{\prime})\neq0$ then $(c^{\prime\prime})^{\ast}=(c^{\prime})^{+}$
and so 
\[
c^{\prime\prime}c^{\prime}=b(c^{\prime\prime})^{\ast}c^{\prime}=b(c^{\prime})^{+}c^{\prime}=bc^{\prime}=ba(c^{\prime})^{\ast}
\]
so $c^{\prime\prime}c^{\prime}\trianglelefteq_{l}ba$. Therefore,
every element on the right-hand side appears also on the left-hand
side. Now, take $c\trianglelefteq_{l}ba$ and define $c^{\prime}=ac^{\ast}$
and $c^{\prime\prime}=b(ac^{\ast})^{+}$.

We observe that
\[
c^{\prime\prime}c^{\prime}=b(ac^{\ast})^{+}ac^{\ast}=bac^{\ast}=c.
\]

Next, note that 
\[
c^{\ast}=(bac^{\ast})^{\ast}\leq(ac^{\ast})^{\ast}\leq c^{\ast}
\]
so $(c^{\prime})^{\ast}=(ac^{\ast})^{\ast}=c^{\ast}=(c^{\prime\prime}c^{\prime})^{\ast}$.
Since $(bac^{\ast})^{\ast}=c^{\ast}=(ac^{\ast})^{\ast}$ the generalized
right ample condition implies

\[
(c^{\prime\prime})^{\ast}=(b(ac^{\ast})^{+})^{\ast}=(ac^{\ast})^{+}=(c^{\prime})^{+}.
\]
Finally, the fact that $(c^{\prime\prime}c^{\prime})^{+}=(c^{\prime\prime})^{+}$
follows from \lemref{Ast_implies_plus}. Therefore, the composition
$\mbox{\ensuremath{C(c^{\prime\prime})\cdot C(c^{\prime})}}$ is non-zero
and $C(c^{\prime\prime})\cdot C(c^{\prime})=C(c^{\prime\prime}c^{\prime})=C(c)$.
This shows that every element from the left-hand side appears on the
right-hand side. For this direction, it remains to show that it appears
only once. Assume $C(c)=C(d^{\prime\prime})C(d^{\prime})$ for $d^{\prime\prime}\trianglelefteq_{l}b$
and $d^{\prime}\trianglelefteq_{l}a$. Then $(d^{\prime})^{\ast}=c^{\ast}$
so we must have $d^{\prime}=a(d^{\prime})^{\ast}=ac^{\ast}=c^{\prime}$.
Since $C(d^{\prime\prime})\cdot C(d^{\prime})$ is non-zero we must
have $(d^{\prime\prime})^{\ast}=(d^{\prime})^{+}=(ac^{\ast})^{+}$
so $d^{\prime\prime}=b(d^{\prime\prime})^{\ast}=b(ac^{\ast})^{+}=c^{\prime\prime}$.
This proves uniqueness.

On the other direction, assume that $\mathcal{C}(S)$ is a category
and $\varphi$ is a homomorphism of $\Bbbk$-algebras. Let $a,b\in S$
and assume that $(ba)^{\ast}=a^{\ast}$. We need to prove that $(ba^{+})^{\ast}=a^{+}$.
The fact that $\varphi(ba)=\varphi(b)\varphi(a)$ implies
\[
\sum_{c\trianglelefteq_{l}ba}C(c)=\sum_{c^{\prime\prime}\trianglelefteq_{l}b}C(c^{\prime\prime})\sum_{c^{\prime}\trianglelefteq_{l}a}C(c^{\prime}).
\]
Choose $c=ba$ from the left-hand side. The morphism $C(ba)$ must
appear on the right-hand side. So there exists $c^{\prime}\trianglelefteq_{l}a$
and $c^{\prime\prime}\trianglelefteq_{l}b$ such that $C(ba)=C(c^{\prime\prime})C(c^{\prime})$.
This implies $(c^{\prime})^{\ast}=(ba)^{\ast}=a^{\ast}$ so $c^{\prime}=aa^{\ast}=a$.
Since the product $C(c)=C(c^{\prime\prime})C(a)$ is defined it must
be the case that $(c^{\prime\prime})^{\ast}=a^{+}$ and therefore
$c^{\prime\prime}=ba^{+}$. This implies $(ba^{+})^{\ast}=a^{+}$
as required.
\end{proof}
\begin{lem}
Let $S$ be a reduced $E$-Fountain semigroup and assume that $\mathcal{C}(S)$
is a category. If $\trianglelefteq_{l}\subseteq\preceq$ where $\preceq$
is some principally finite partial order, then $\varphi$ is an isomorphism
of $\Bbbk$-modules. 
\end{lem}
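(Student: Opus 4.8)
The plan is to prove that $\varphi$ is bijective by exploiting its \emph{unitriangular} shape with respect to the partial order $\preceq$. Both $\Bbbk S$ and $\Bbbk_{0}\mathcal{C}(S)$ are free $\Bbbk$-modules, with respective bases $S$ and the set of non-zero morphisms $\{C(a)\mid a\in S\}$, and the assignment $a\mapsto C(a)$ is a bijection between these bases. Since $\trianglelefteq_{l}$ is reflexive we have $a\trianglelefteq_{l}a$, so in $\varphi(a)=\sum_{c\trianglelefteq_{l}a}C(c)$ the morphism $C(a)$ occurs with coefficient $1$; and for every other summand $C(c)$ we have $c\trianglelefteq_{l}a$ with $c\neq a$, whence $c\preceq a$ and $c\neq a$, i.e.\ $c\prec a$ by antisymmetry of $\preceq$. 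Thus $\varphi(a)=C(a)+\sum_{c\prec a,\,c\trianglelefteq_{l}a}C(c)$, so the matrix of $\varphi$ in these bases is the identity plus a part supported strictly below the diagonal relative to $\preceq$.

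First I would record that principal finiteness of $\preceq$ makes the order well-founded: every principal down-set $\{c\mid c\preceq a\}$ is finite, so there is no infinite strictly descending chain issuing from a fixed element, and every non-empty finite subset of $S$ has a $\preceq$-maximal element. This is exactly what licenses the two triangularity arguments. For surjectivity I would show by Noetherian induction on $a$ (with respect to $\preceq$) that each $C(a)$ lies in $\im\varphi$. Assuming $C(c)\in\im\varphi$ for all $c\prec a$, the displayed expression yields $C(a)=\varphi(a)-\sum_{c\prec a,\,c\trianglelefteq_{l}a}C(c)\in\im\varphi$; the base case (minimal $a$) is the special case where the sum is empty and $\varphi(a)=C(a)$. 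Since the $C(a)$ span $\Bbbk_{0}\mathcal{C}(S)$, this gives surjectivity.

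For injectivity, suppose $\varphi\bigl(\sum_{a}k_{a}a\bigr)=0$ and let $T=\{a\mid k_{a}\neq 0\}$ be the (finite) support; assume toward a contradiction that $T\neq\emptyset$. Collecting coefficients and using linear independence of the $C(c)$ gives $\sum_{a\,:\,c\trianglelefteq_{l}a}k_{a}=0$ for every $c$. Choosing a $\preceq$-maximal element $a_{0}$ of $T$ and taking $c=a_{0}$, any $a$ with $a_{0}\trianglelefteq_{l}a$ satisfies $a_{0}\preceq a$, so any such $a$ lying in $T$ must equal $a_{0}$ by maximality; hence the coefficient equation at $c=a_{0}$ reads $k_{a_{0}}=0$, contradicting $a_{0}\in T$. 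Therefore $T=\emptyset$ and $\varphi$ is injective, completing the proof that $\varphi$ is an isomorphism of $\Bbbk$-modules.

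The only real subtlety — the main obstacle — is that $S$ may be infinite, so the ``unitriangular matrix'' is infinite and cannot simply be inverted formally. Principal finiteness of $\preceq$ is precisely the hypothesis that resolves this, simultaneously guaranteeing that each $\varphi(a)$ is a finite sum (so $\varphi$ is well defined), that the Noetherian induction used for surjectivity terminates, and that finite supports admit $\preceq$-maximal elements for the injectivity argument. I expect no further difficulty beyond carefully invoking reflexivity of $\trianglelefteq_{l}$, the inclusion $\trianglelefteq_{l}\subseteq\preceq$, and antisymmetry of $\preceq$ at the points indicated.
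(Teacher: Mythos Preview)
Your proposal is correct and is the standard unitriangularity/M\"obius-type argument one expects here; the paper itself does not reprove this lemma but merely cites \cite[Lemma~4.3]{Stein2021}, whose proof is exactly along these lines. Your handling of the only genuine issue---passing from principal finiteness of $\preceq$ to well-foundedness so that the Noetherian induction and the maximal-element argument go through---is accurate and matches what is needed.
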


\begin{proof}
Identical to the proof given in \cite[Lemma 4.3]{Stein2021}.
\end{proof}
The immediate corollary is:
\begin{thm}
\label{thm:isomorphism_theorem} Let $S$ be a reduced $E$-Fountain
semigroup and assume that $\trianglelefteq_{l}$ is contained in a
principally finite partial order. Then, the generalized right ample
condition holds if and only if $\mathcal{C}(S)$ is a category and
$\varphi$ is an isomorphism of $\Bbbk$-algebras.
\end{thm}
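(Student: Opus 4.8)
The plan is to assemble the final claim from the two preceding results, checking only that their hypotheses fit together. The key preliminary observation is that the containment hypothesis is strictly stronger than principal finiteness of $\trianglelefteq_{l}$ on its own: if $\trianglelefteq_{l}\subseteq\preceq$ for a principally finite partial order $\preceq$, then for every $a\in S$ we have $\{c\in S\mid c\trianglelefteq_{l}a\}\subseteq\{c\in S\mid c\preceq a\}$, and since the right-hand set is finite so is the left-hand one. Thus $\trianglelefteq_{l}$ is itself principally finite, which is exactly the hypothesis needed to invoke the preceding Theorem.

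For the forward implication I would assume the generalized right ample condition. By the preceding Theorem (applied via the principal finiteness of $\trianglelefteq_{l}$ just noted), $\mathcal{C}(S)$ is a category and $\varphi$ is a homomorphism of $\Bbbk$-algebras. Now that $\mathcal{C}(S)$ is known to be a category and $\trianglelefteq_{l}\subseteq\preceq$, the preceding Lemma applies and tells us that $\varphi$ is an isomorphism of $\Bbbk$-modules. A bijective $\Bbbk$-algebra homomorphism is automatically a $\Bbbk$-algebra isomorphism, so combining the two statements gives that $\varphi$ is an isomorphism of $\Bbbk$-algebras, as desired.

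For the converse I would assume that $\mathcal{C}(S)$ is a category and that $\varphi$ is an isomorphism of $\Bbbk$-algebras. In particular $\varphi$ is a homomorphism of $\Bbbk$-algebras, and $\trianglelefteq_{l}$ is principally finite by the same containment argument. The converse half of the preceding Theorem then yields the generalized right ample condition directly, completing the equivalence. The whole argument is a short assembly, with no genuine obstacle; the only point requiring care is the hypothesis bookkeeping---verifying that the single containment assumption simultaneously supplies the principal finiteness needed by the Theorem and the containment needed by the Lemma---which is precisely why the statement can be presented as an immediate corollary.
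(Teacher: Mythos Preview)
Your proof is correct and matches the paper's own approach exactly: the paper presents this theorem as an ``immediate corollary'' of the preceding Theorem and Lemma, and your write-up simply spells out the hypothesis bookkeeping (that containment in a principally finite partial order forces $\trianglelefteq_{l}$ itself to be principally finite) and the combination of bijectivity with the homomorphism property. There is nothing to add.
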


\begin{rem}
\label{rem:Simplified_isomorphism}Assume the conditions of \thmref{isomorphism_theorem}
hold and $S$ satisfies also the right congruence condition. Then
$\mathcal{C}_{\bullet}(S)$ is a category and $\Bbbk\mathcal{C}_{\bullet}(S)\simeq\Bbbk_{0}\mathcal{C}(S)$
as noted in \remref{Simplify_the_category}. Therefore, in this case
we obtain a simplified isomorphism $\Bbbk S\simeq\Bbbk\mathcal{C}_{\bullet}(S)$
without a contracted algebra or zero morphisms.
\end{rem}

\section{\label{sec:Generalization}Generalization of older results}

In this section we will explain why \thmref{isomorphism_theorem}
is a simultaneous generalization of three different results in the
literature.

\paragraph{The congruence condition}

\thmref{isomorphism_theorem} is proved in \cite[Theorem 4.4]{Stein2021}
under the additional assumption that $S$ satisfies the congruence
condition. To be precise, \cite[Theorem 4.4]{Stein2021} is the simplified
case given in \remref{Simplified_isomorphism} because in this case
$\mathcal{C}_{\bullet}(S)$ is a category by \corref{The_right_congruence_case}.
This case includes inverse semigroups \cite{Steinberg2006}, semigroups
of partial functions \cite{Stein2016} and the Catalan monoid \cite{Margolis2018A}.

\paragraph{Generalized matrix representations and strict right ample semigroups}

Let $A_{1},\ldots,A_{n}$ be algebras over a commutative unital ring
$\Bbbk$. For every $1\leq i,j\leq n$ let $A_{i,j}$ be an $(A_{i},A_{j})$-bimodule
where $A_{i,i}=A_{i}$. Assume that for every triple $(i,j,k)$ there
exists an $(A_{i},A_{j})$-bimodule homomorphism $\Phi_{i,k}^{j}:A_{i,j}\otimes A_{j,k}\to A_{i,k}$
such that the associativity condition
\[
\Phi_{i,l}^{k}\circ(\Phi_{i,k}^{j}\otimes\id_{k,l})=\Phi_{i,l}^{j}\circ(\id_{i,j}\otimes\Phi_{j,l}^{k})
\]
holds (here $\id_{i,j}:A_{i,j}\to A_{i,j}$ is the identity function).
The $\Bbbk$-module $M[A_{i,j}]$ of all matrices
\[
\left(\begin{array}{cccc}
a_{1} & a_{1,2} & \cdots & a_{1,n}\\
a_{2,1} & a_{2}\\
\vdots &  & \ddots\\
a_{n,1} &  &  & a_{n}
\end{array}\right)
\]
with $a_{i}\in A_{i}$, $a_{i,j}\in A_{i,j}$ is a $\Bbbk$-algebra
with the obvious multiplication
\[
[M\cdot N]_{i,j}=\sum_{k=1}^{n}\Phi_{i,j}^{k}(M_{i,k}\otimes N_{k,j}).
\]
If $A$ is a $\Bbbk$-algebra, an isomorphism of $\Bbbk$-algebras
$\varphi:A\to M[A_{i,j}]$ is called a \emph{generalized matrix representation}
of $A$ with degree $n$. 

We can restate \thmref{isomorphism_theorem} in this language, at
least when $E$ is a finite set. Let $S$ be a reduced $E$-Fountain
semigroup which satisfies the generalized right ample condition and
assume that $E=\{e_{1},\ldots,e_{n}\}$ is a finite set. Define $A_{i,j}=\Bbbk_{0}\mathcal{C}(S)(e_{j},e_{i})$
where $\Bbbk$ is any commutative unital ring. Note that $A_{i}=\Bbbk_{0}\mathcal{C}(S)(e_{i},e_{i})$
is indeed a $\Bbbk$-algebra. Define $\Phi_{i,k}^{j}:A_{i,j}\otimes A_{j,k}\to A_{i,k}$
on basis elements by
\begin{align*}
\Phi_{i,k}^{j}(C(b)\otimes C(a)) & =C(b)\cdot C(a)
\end{align*}
The associativity of $\mathcal{C}(S)$ implies the associativity condition
on $M[A_{i,j}]$.

We can define $\mbox{\ensuremath{\varphi:\Bbbk_{0}\mathcal{C}(S)\to M[A_{i,j}]}}$
by setting
\[
[\varphi(C(s))]_{i,j}=\begin{cases}
C(s) & e_{i}=s^{+},\quad e_{j}=s^{\ast}\\
0 & \text{otherwise}.
\end{cases}
\]
It is clear that $\varphi$ is a generalized matrix representation
for $\Bbbk_{0}\mathcal{C}(S)$ of degree $|E|$. \thmref{isomorphism_theorem}
now implies:
\begin{thm}
\label{thm:Generalized_matrix_representation_format}Let $S$ be a
reduced $E$-Fountain semigroup and let $\Bbbk$ be a commutative
unital ring. Assume that $E$ is finite, $S$ satisfies the generalized
right ample condition and $\trianglelefteq_{l}$ is contained in a
partial order. Then $\Bbbk S$ has a generalized matrix representation
of degree $|E|$.
\end{thm}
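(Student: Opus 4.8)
The plan is to realize $\Bbbk S$ as a generalized matrix algebra by composing two isomorphisms: first the algebra isomorphism $\varphi\colon\Bbbk S\to\Bbbk_{0}\mathcal{C}(S)$ furnished by \thmref{isomorphism_theorem}, and then the generalized matrix representation of $\Bbbk_{0}\mathcal{C}(S)$ that is set up in the paragraphs preceding the statement. Concretely, I would take $A_{i,j}=\Bbbk_{0}\mathcal{C}(S)(e_{j},e_{i})$, observe that each $A_{i}=A_{i,i}$ is the endomorphism algebra of the object $e_{i}$ in $\mathcal{C}(S)$, and let $\Phi_{i,k}^{j}$ be induced by composition via $C(b)\otimes C(a)\mapsto C(b)\cdot C(a)$. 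The associativity condition required of the maps $\Phi_{i,k}^{j}$ is then exactly the associativity of composition in $\mathcal{C}(S)$, which holds by \lemref{Category_with_partial_composition}; this is the one structural check I would spell out in full.

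Before invoking \thmref{isomorphism_theorem} I must confirm its hypotheses under the present assumptions. The generalized right ample condition is assumed, so by \lemref{Category_with_partial_composition} the graph $\mathcal{C}(S)$ is a genuine category with zero morphisms and $\Bbbk_{0}\mathcal{C}(S)$ is defined; since $E$ supplies only $|E|=n$ objects, the category has finitely many objects and $\Bbbk_{0}\mathcal{C}(S)$ is unital, so that it genuinely is a matrix algebra of the prescribed degree. The remaining hypothesis of \thmref{isomorphism_theorem} is that $\trianglelefteq_{l}$ be contained in a principally finite partial order. Here I would exploit the finiteness of $E$: for each $a\in S$ one has $\{c\in S\mid c\trianglelefteq_{l}a\}=\{ae\mid e\in E\}$, a set of cardinality at most $n$, so $\trianglelefteq_{l}$ is principally finite; combined with the hypothesis that $\trianglelefteq_{l}$ is contained in a partial order, this places us squarely in the setting of \thmref{isomorphism_theorem}.

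Granting all of this, $\varphi$ is an isomorphism of $\Bbbk$-algebras, and the explicit map $\Bbbk_{0}\mathcal{C}(S)\to M[A_{i,j}]$ sending $C(s)$ to the matrix with $C(s)$ in position $(i,j)$ when $e_{i}=s^{+}$ and $e_{j}=s^{\ast}$ (and $0$ elsewhere) is an algebra isomorphism, since it carries the category product to the matrix product defined through the $\Phi_{i,k}^{j}$. Composing the two isomorphisms yields $\Bbbk S\simeq\Bbbk_{0}\mathcal{C}(S)\simeq M[A_{i,j}]$, a generalized matrix representation of degree $|E|$.

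I expect the main obstacle to be hypothesis bookkeeping rather than any new computation: the one substantive point is to verify that ``contained in a partial order'' together with ``$E$ finite'' really delivers the principal-finiteness hypothesis demanded by \thmref{isomorphism_theorem}, which is why I would foreground the identity $\{c\mid c\trianglelefteq_{l}a\}=\{ae\mid e\in E\}$. Everything else is a faithful transcription of the already-established category isomorphism into matrix language, with the associativity of $\mathcal{C}(S)$ doing the work of the associativity axiom for $M[A_{i,j}]$.
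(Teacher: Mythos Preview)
Your approach matches the paper's exactly: the paper sets up the generalized matrix description of $\Bbbk_{0}\mathcal{C}(S)$ in the paragraphs preceding the statement and then writes ``\thmref{isomorphism_theorem} now implies'' the result, and you spell out that same reduction with more care.

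There is, however, a gap in your hypothesis check (one the paper itself glosses over). You correctly observe that $E$ finite forces $\trianglelefteq_{l}$ to be principally finite via $\{c\mid c\trianglelefteq_{l}a\}=\{ae\mid e\in E\}$. But \thmref{isomorphism_theorem} asks that $\trianglelefteq_{l}$ be contained in a \emph{principally finite} partial order, and ``$\trianglelefteq_{l}$ principally finite'' together with ``$\trianglelefteq_{l}$ contained in some partial order'' does not deliver this: any partial order containing $\trianglelefteq_{l}$ must contain its reflexive--transitive closure, whose principal down-sets can be infinite even when $|E|<\infty$. Concretely, let $S$ be the semigroup of nonempty alternating words in two idempotent letters $e_{1},e_{2}$ under concatenation with the rule $e_{i}e_{i}=e_{i}$. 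One checks that $S$ is reduced $E$-Fountain for $E=\{e_{1},e_{2}\}$, satisfies the generalized right ample condition, and has $\trianglelefteq_{l}$ contained in the prefix order; yet the transitive closure of $\trianglelefteq_{l}$ below any $a$ consists of all words having $a$ as a prefix, an infinite set. In this example $\varphi$ is not surjective and $\Bbbk S$ is not even unital, so no generalized matrix representation in the sense described exists. In short, the stated hypotheses do not place you ``squarely in the setting of \thmref{isomorphism_theorem}''; your argument (and the paper's) go through cleanly once the hypothesis is strengthened to ``$\trianglelefteq_{l}$ is contained in a principally finite partial order'', matching \thmref{isomorphism_theorem} verbatim.
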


This generalizes \cite[Theorem 3.2]{Guo2018} which was proved for
the class of strict right ample semigroups. A strict right ample semigroup
$S$ is a reduced $E$-Fountain semigroup (with $E=E(S)$) that satisfies
stronger assumptions than the assumptions of \thmref{Generalized_matrix_representation_format}.
For instance, in a strict right ample semigroup the set $E(S)$ is
a commutative subsemigroup so $ef=e\iff fe=e$ clearly holds and $\trianglelefteq_{l}$
is a partial order. Moreover, the right ample condition $ea=a(ea)^{\ast}$
holds and this is a stronger condition than the generalized right
ample condition (\cite[Proposition 3.14]{Stein2021}).

\paragraph{Finite idempotent semigroups with central idempotents}

Let $S$ be a finite idempotent semigroup with central idempotents.
This means that $S^{2}=S$ and $ea=ae$ for every $a\in S$ and $e\in E(S)$.
Setting $E=E(S)$, it is shown in \cite[Section 6]{Steinberg2022}
that every $s\in S$ has a left (right) identity from $E$. If $e,f\in E$
are right identities of $a$ then $ef$ is also an idempotent and
a right identity of $a$. Since $ef\leq e,f$ we deduce that every
$a\in S$ has a minimum right identity. Since $E$ is central it is
clear that $ef=e\iff fe=e$ and $e\in E$ is a right identity of $s$
if and only if it is left identity of $s$. Therefore, $S$ is a reduced
$E$-Fountain semigroup with $s^{+}=s^{\ast}$ for every $s\in S$.
\begin{lem}
Let $S$ be a reduced $E$-Fountain semigroup where the set $E$ is
central then $S$ satisfies the generalized right ample condition.
\end{lem}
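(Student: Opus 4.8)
The plan is to verify the equivalent form of the generalized right ample condition given in item (2) of \propref{Equivalent_characterizations_of_gra}, namely the implication $(ba)^{\ast}=a^{\ast}\implies(ba^{+})^{\ast}=a^{+}$ for all $a,b\in S$. Working with this right-identity reformulation is far more convenient than the original identity of \defref{Generalized_right_ample_id}, since centrality of $E$ interacts directly with the defining relations $aa^{\ast}=a$ and $a^{+}a=a$.

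The first step is to record that centrality collapses the two operators: for every $a\in S$ the relation $aa^{\ast}=a$ together with $ea=ae$ gives $a^{\ast}a=aa^{\ast}=a$, so $a^{\ast}$ is also a left identity of $a$ and hence $a^{+}\leq a^{\ast}$; the symmetric argument from $a^{+}a=a$ yields $a^{\ast}\leq a^{+}$, so that $a^{+}=a^{\ast}$ for all $a\in S$. In particular every $a^{\ast}$ is a two-sided identity of $a$, and likewise $x^{\ast}x=x$ for every $x\in S$. With this in hand the implication to be proved simplifies to $(ba)^{\ast}=a^{\ast}\implies(ba^{\ast})^{\ast}=a^{\ast}$.

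For the main step I would set $c=ba^{\ast}$ and prove the two inequalities $c^{\ast}\leq a^{\ast}$ and $a^{\ast}\leq c^{\ast}$, whose conjunction gives $c^{\ast}=a^{\ast}$, i.e. $(ba^{\ast})^{\ast}=a^{\ast}=a^{+}$. The easy inequality is $c^{\ast}\leq a^{\ast}$: since $ca^{\ast}=ba^{\ast}a^{\ast}=ba^{\ast}=c$, the idempotent $a^{\ast}\in E$ is a right identity of $c$, so minimality of $c^{\ast}$ forces $c^{\ast}\leq a^{\ast}$. The reverse inequality is where the hypothesis enters and is the real content of the proof. Using $a^{\ast}a=a$ one checks that $ca=ba^{\ast}a=ba$, while $c^{\ast}c=c$ (from the previous step) gives $c^{\ast}(ba)=c^{\ast}(ca)=ca=ba$; thus $c^{\ast}\in E$ is a \emph{left} identity of $ba$. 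Minimality of $(ba)^{+}$ then yields $(ba)^{+}\leq c^{\ast}$, and since $(ba)^{+}=(ba)^{\ast}=a^{\ast}$ by the hypothesis and the first step, we obtain $a^{\ast}\leq c^{\ast}$, completing the argument.

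The step I expect to be the main obstacle is precisely this reverse inequality $a^{\ast}\leq c^{\ast}$: the hypothesis $(ba)^{\ast}=a^{\ast}$ only directly controls right identities, so one must pass through the left-identity side, and this is exactly where the collapse $(ba)^{+}=(ba)^{\ast}$ (valid only because $E$ is central) is indispensable. I would also double-check that the whole argument stays inside the order structure and never requires $E$ to be closed under products: each inequality is deduced purely from minimality of some $x^{\ast}$ or $x^{+}$ tested against a single idempotent of $E$, so no assumption that, say, $a^{\ast}c^{\ast}\in E$ is ever needed.
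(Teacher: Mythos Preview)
Your proof is correct and follows essentially the same route as the paper: collapse $a^{+}=a^{\ast}$ via centrality, reduce to proving $(ba^{\ast})^{\ast}=a^{\ast}$, and establish this by two inequalities. The only variation is in the reverse inequality $a^{\ast}\leq(ba^{\ast})^{\ast}$: the paper shows $(ba^{\ast})^{\ast}$ is a \emph{right} identity of $ba$ by explicitly commuting it past $a$ (namely $ba(ba^{\ast})^{\ast}=ba^{\ast}a(ba^{\ast})^{\ast}=ba^{\ast}(ba^{\ast})^{\ast}a=ba^{\ast}a=ba$), while you show it is a \emph{left} identity of $ba$ using $c^{\ast}c=c$ and $ca=ba$, then invoke $(ba)^{+}=(ba)^{\ast}=a^{\ast}$. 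These are dual executions of the same idea; your version hides the explicit use of centrality inside the $+/\ast$ collapse rather than in a commutation step.
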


\begin{proof}
Assume $(ba)^{\ast}=a^{\ast}$ for $a,b\in S$. We need to prove $(ba^{+})^{\ast}=a^{+}$
which is equivalent to $(ba^{\ast})^{\ast}=a^{\ast}$. Since $ba^{\ast}a^{\ast}=ba^{\ast}$
it is clear that $(ba^{\ast})^{\ast}\leq a^{\ast}$. On the other
hand
\[
ba(ba^{\ast})^{\ast}=ba^{\ast}a(ba^{\ast})^{\ast}=ba^{\ast}(ba^{\ast})^{\ast}a=ba^{\ast}a=ba
\]
so
\[
a^{\ast}=(ba)^{\ast}\leq(ba^{\ast})^{\ast}.
\]
Therefore $(ba^{\ast})^{\ast}=a^{\ast}$ as required.
\end{proof}
Since the set $E$ is commutative the relation $\trianglelefteq_{l}$
is a partial order. We have checked the requirements of \thmref{isomorphism_theorem}
so $\Bbbk S\simeq\Bbbk_{0}\mathcal{C}(S)$ for every commutative unital
ring $\Bbbk$. In this special case, $s^{+}=s^{\ast}$ for every $s\in S$
so every non-zero morphism in $\mathcal{C}(S)$ is an endomorphism.
In particular, the set of identity morphisms $\{C(e)\mid e\in E$\}
forms an orthogonal set of central idempotents of $\Bbbk_{0}\mathcal{C}(S)$
so 
\[
\Bbbk_{0}\mathcal{C}(S)\simeq\prod_{e\in E}\Bbbk_{0}\mathcal{C}(S)C(e)\simeq\prod_{e\in E}\Bbbk_{0}\mathcal{C}(S)(e,e)
\]
as $\Bbbk$-algebras. We deduce that:
\begin{thm}
\label{thm:Central_idempotents_theorem}Let $S$ be a finite semigroup
with central idempotents such that $S^{2}=S$ then
\[
\Bbbk S\simeq\prod_{e\in E}\Bbbk_{0}\mathcal{C}(S)(e,e).
\]
\end{thm}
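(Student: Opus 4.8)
The plan is to reduce everything to \thmref{isomorphism_theorem} together with the Peirce decomposition of a unital algebra along a family of central idempotents; the special feature here is that the equality $s^{+}=s^{\ast}$ makes all objects ``self-contained'' and forces the identity morphisms to be central.

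First I would check the hypotheses of \thmref{isomorphism_theorem}. By the discussion preceding the statement, $S$ is reduced $E$-Fountain with $E=E(S)$ central and $s^{+}=s^{\ast}$ for all $s\in S$; the preceding lemma shows that centrality of $E$ already gives the generalized right ample condition. Because $E$ is a commutative subsemigroup, $\trianglelefteq_{l}$ is an honest partial order, and since $S$ is finite it is principally finite. Hence \thmref{isomorphism_theorem} applies and $\Bbbk S\simeq\Bbbk_{0}\mathcal{C}(S)$.

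Next I would show that $\{C(e)\mid e\in E\}$ is a complete set of orthogonal central idempotents of $\Bbbk_{0}\mathcal{C}(S)$. Completeness and orthogonality are formal: the $C(e)$ are the identity morphisms of the (finitely many) objects of $\mathcal{C}(S)$, so $\sum_{e\in E}C(e)$ is the unit, while $C(e)C(f)=0$ for $e\neq f$ since the relevant domain and range disagree. Centrality is the only place where $s^{+}=s^{\ast}$ is used: for a basis morphism $C(a)$ one has $C(e)C(a)=C(a)$ exactly when $e=a^{+}$ (and $0$ otherwise), while $C(a)C(e)=C(a)$ exactly when $e=a^{\ast}$ (and $0$ otherwise); since $a^{+}=a^{\ast}$ the two conditions coincide, so $C(e)C(a)=C(a)C(e)$, and extending linearly gives that each $C(e)$ is central.

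Finally I would invoke the structure fact recorded in the preliminaries: a unital algebra with a finite complete set of orthogonal central idempotents $\{C(e)\}$ splits as $\Bbbk_{0}\mathcal{C}(S)\simeq\prod_{e\in E}C(e)\,\Bbbk_{0}\mathcal{C}(S)\,C(e)$. The corner $C(e)\,\Bbbk_{0}\mathcal{C}(S)\,C(e)$ is spanned by those $C(a)$ satisfying $C(e)C(a)C(e)=C(a)$, i.e. $a^{\ast}=a^{+}=e$, which is exactly the endomorphism algebra $\Bbbk_{0}\mathcal{C}(S)(e,e)$; chaining this with $\Bbbk S\simeq\Bbbk_{0}\mathcal{C}(S)$ yields the claim. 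The only genuinely load-bearing step is the centrality verification, and there $s^{+}=s^{\ast}$ does all the work --- the rest is a citation of \thmref{isomorphism_theorem} and routine bookkeeping with identity morphisms.
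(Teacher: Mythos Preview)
Your proposal is correct and follows essentially the same route as the paper: apply \thmref{isomorphism_theorem} using centrality of $E$ (giving the generalized right ample condition) and commutativity of $E$ (giving that $\trianglelefteq_l$ is a partial order), then use $s^{+}=s^{\ast}$ to see that every non-zero morphism of $\mathcal{C}(S)$ is an endomorphism, so the identity morphisms $C(e)$ form a complete set of central orthogonal idempotents and $\Bbbk_{0}\mathcal{C}(S)$ splits as the product of its endomorphism algebras. Your centrality verification via $C(e)C(a)=C(a)\iff e=a^{+}$ versus $C(a)C(e)=C(a)\iff e=a^{\ast}$ is exactly the content of the paper's one-line observation that all non-zero morphisms are endomorphisms.
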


\thmref{Central_idempotents_theorem} is precisely \cite[Theorem 6.5]{Steinberg2022}.

\section{\label{sec:Examples}Additional examples }

\subsection{The monoid of order-preserving functions}

Let $\T_{n}$ be the monoid of all functions $f:[n]\to[n]$ (where
$[n]=\{1,\ldots,n\}$). A function $f\in\T_{n}$ is called \emph{order-preserving
}if $i\leq j\implies f(i)\leq f(j)$ for every $i,j\in[n]$. We denote
by $\Op_{n}$ the submonoid of $\T_{n}$ of all order-preserving functions.
In this section we will see that $\Op_{n}$ is a reduced $E$-Fountain
semigroup that satisfies the generalized \emph{left} ample condition
so by the dual of \thmref{isomorphism_theorem} there is an isomorphism
$\Bbbk\Op_{n}\simeq\Bbbk_{0}\mathcal{C}(\Op_{n})$ of $\Bbbk$-algebras
for any commutative unital ring $\Bbbk$. We will also describe more
explicitly the composition in $\mathcal{C}(\Op_{n})$.

\paragraph*{Elementary observations}

Recall that the kernel of of a function $f\in\Op_{n}$ - $\ker(f)$
- is the equivalence relation on its domain defined by $(i_{1},i_{2})\in\ker(f)$
if and only if $f(i_{1})=f(i_{2})$. Since $f$ is order-preserving,
the kernel classes of $f$ are \emph{intervals }- if $i_{1}\leq i_{2}\leq i_{3}$
and $(i_{1},i_{3})\in\ker(f)$ then $(i_{1},i_{2})\in\ker f$ also.

Let $K_{1},\ldots,K_{l}$ be the kernel classes of $f$. Let $y_{i}=f(K_{i}$)
and assume that the indices are arranged such that $y_{1}<y_{2}<\cdots<y_{l}$.
Choose $x_{i}$ to be the maximal element of $K_{i}$ and note that
it must be the case that $x_{l}=n$. Now, set 
\[
X=\{x_{1},\ldots,x_{l}\},\quad Y=\{y_{1},\ldots,y_{l}\}
\]
so from every $f\in\Op_{n}$ we can extract two sets $X,Y\subseteq[n]$
such that $|X|=|Y|$ and $n\in X$. In the other direction, from two
such sets $X=\{x_{1},\ldots,x_{l}\}$ where $x_{l}=n$ and $\mbox{\ensuremath{Y=\{y_{1},\ldots,y_{l}\}}}$
we can retrieve an $f$ by setting 
\[
f(x)=\begin{cases}
y_{1} & x\leq x_{1}\\
y_{i} & x_{i-1}<x\leq x_{i}\quad(2\leq i\leq l).
\end{cases}
\]
As a conclusion, we observe that there is a one-to-one correspondence
between functions $f\in\Op_{n}$ and pairs of sets $X,Y\subseteq[n]$
such that $|X|=|Y|$ and $n\in X$. From now on we denote by $f_{Y,X}$
the function associated with $X,Y\subseteq[n]$ (in contrary to \cite[Section 4.2]{stein2022}
where this function was denoted $f_{X,Y})$. Note that $\im(f_{Y,X})=Y$
(where $\im(f)$ is the image of $f$) and $X$ contains the maximal
elements of the kernel classes of $f_{Y,X}$. Another easy observation
is that $f_{Z,Y}f_{Y,X}=f_{Z,X}$.

The monoid $\Op_{n}$ is a regular submonoid of $\T_{n}$ and its
Green's equivalence relations are well-known (see \cite[Section 2]{GomesHowie1992}).
In particular:
\begin{align*}
f_{Y_{1},X_{1}}\,\Lc\,f_{Y_{2},X_{2}} & \iff\ker(f_{Y_{1},X_{1}})=\ker(f_{Y_{2},X_{2}})\iff X_{1}=X_{2}\\
f_{Y_{1},X_{1}}\,\Rc\,f_{Y_{2},X_{2}} & \iff\im(f_{Y_{1},X_{1}})=\im(f_{Y_{2},X_{2}})\iff Y_{1}=Y_{2}
\end{align*}
We want to define our distinguished set of idempotents $E$. Note
that for every $X\subseteq[n]$ with $n\in X$ the function $f_{X,X}$
is an idempotent that can be described by $f_{X,X}(i)=\min\{x\in X\mid i\leq x\}$.
It will be convenient to define $e_{X}=f_{X,X}$ where $X\subseteq[n]$
with $n\in X$ and we set $E=\{e_{X}\mid n\in X\}$ (this is precisely
the set of idempotents of the Catalan monoid - see \cite[Section 17.5.2]{Steinberg2016}).
We have just seen that $f_{Y,X}\Lc e_{X}$ and since $\Lc\subseteq\Lt$
we have $f_{Y,X}\,\Lt\,e_{X}$. On the other hand, it is not always
true that $f_{Y,X}\,\Rc\,e_{Y}$ because $e_{Y}$ is defined only
if $n\in Y$. Denote by $\OF_{n}$ the submonoid of $\Op_{n}$ of
all order-preserving functions such that $f(n)=n$ and note that $\OF_{n}=\{f_{Y,X}\in\Op_{n}\mid n\in Y\}$.
This monoid was discussed in detail in \cite[Section 4.2]{stein2022}.
We have $f_{Y,X}\,\Rc\,e_{Y}$ if only if $f_{Y,X}\in\OF_{n}$. However,
it is easy to see that 
\[
e_{Z}f_{Y,X}=f_{Y,X}
\]
if and only if $Y\subseteq Z$. Therefore, the minimal left identity
of $f_{Y,X}$ from $E$ is $e_{Y\cup\{n\}}$ hence $f_{Y,X}\,\Rt\,e_{Y\cup\{n\}}$.
It will be convenient to set $\widetilde{Y}=Y\cup\{n\}$ for every
$Y\subseteq[n]$ so $f_{Y,X}\,\Rt\,e_{\widetilde{Y}}$ for every $f_{Y,X}\in\Op_{n}$.
The first conclusion is that there exists an idempotent of $E$ in
every $\Lt$-class and $\Rt$-class of $\Op_{n}$. It is also easy
to check that $e_{Y}e_{X}=e_{X}\iff e_{X}e_{Y}=e_{X}\iff X\subseteq Y$
(see also \cite[Section 6]{Stein2021} where we derived this fact
from \cite[Lemma 3.6]{Denton2010}) so $\Op_{n}$ is a reduced $E$-Fountain
semigroup. Since $\Lc\subseteq\Lt$ and there is a unique element
of $E$ in every $\Lt$-class we deduce that $\Lc=\Lt$ and therefore
$\Lt$ is a right congruence so $\Op_{n}$ satisfies the right congruence
condition. We will see later that $(f_{W,Z})^{\ast}=(f_{Y,X})^{+}$
does not always imply that $(f_{W,Z}f_{Y,X})^{+}=(f_{W,Z})^{+}$ so
in view of \corref{The_right_congruence_case} we deduce that $\Op_{n}$
does not satisfy the generalized right ample condition. However, it
turns out that it satisfies the generalized left ample condition.

\paragraph*{Generalized left ample condition and isomorphism of algebras}

First, we state the dual of the relevant results from \secref{Isomorphism}.
Every $\Rt$-class of a reduced $E$-Fountain semigroup $S$ is an
incomplete right $S$-action and a partial function
\[
l_{a}:\Rt(a^{\ast})\to\Rt(a^{+})
\]
is defined by
\[
l_{a}(b)=\begin{cases}
ab & (ab)^{+}=a^{+}\\
\text{undefined} & \text{otherwise}.
\end{cases}
\]
 The dual of \propref{Equivalent_characterizations_of_gra} is:
\begin{prop}
\label{prop:Equivalent_characterizations_of_gra_dual}Let $S$ be
a reduced $E$-Fountain semigroup. The following conditions are equivalent.
\begin{enumerate}
\item The semigroup $S$ satisfies $(((ab)^{+}a)^{\ast}b)^{+}=((ab)^{+}a)^{\ast}$
for every $a,b\in S$.
\item The semigroup $S$ satisfies $(ab)^{+}=a^{+}\implies(a^{\ast}b)^{+}=a^{\ast}$
for every $a,b\in S$.
\item The partial function $l_{a}:\Rt(a^{\ast})\to\Rt(a^{+})$ is a partial
homomorphism of incomplete right $S$-actions for every $a\in S$.
\end{enumerate}
\end{prop}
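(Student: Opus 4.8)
The plan is to deduce this proposition from \propref{Equivalent_characterizations_of_gra} by passing to the opposite semigroup, thereby avoiding any repetition of the arguments in its proof. (One could instead mirror that proof line by line, but the duality argument is shorter and less error-prone.)

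First I would verify that $S^{\op}$ --- the semigroup with the same underlying set and reversed product $a\cdot_{\op}b=ba$ --- is again a reduced $E$-Fountain semigroup with the same distinguished set $E$. The condition $ef=e\iff fe=e$ is symmetric in its two factors, so it survives passage to $S^{\op}$, and the existence of a minimum right (respectively left) identity from $E$ for each element of $S^{\op}$ is precisely the existence of a minimum left (respectively right) identity in $S$. Consequently the two unary operations are interchanged, $a^{\ast_{\op}}=a^{+}$ and $a^{+_{\op}}=a^{\ast}$, as are the generalized Green's relations, $\Lt^{\op}=\Rt$ and $\Rt^{\op}=\Lt$; moreover incomplete left $S^{\op}$-actions are exactly incomplete right $S$-actions, since a homomorphism $S^{\op}\to\PT_{X}$ is the same as a homomorphism $S\to\PT_{X}^{\op}$. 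I would then check straight from the definition that the partial map $r_{a}$ of \propref{Equivalent_characterizations_of_gra}, formed in $S^{\op}$, is literally $l_{a}$ formed in $S$ (same domain $\Rt(a^{\ast})$, same codomain $\Rt(a^{+})$, and the same ``defined-when'' clause $(ab)^{+}=a^{+}$ with common value $ab$). Granting this dictionary, condition $(2)$ of \propref{Equivalent_characterizations_of_gra} for $S^{\op}$, namely $(b\cdot_{\op}a)^{\ast_{\op}}=a^{\ast_{\op}}\implies(b\cdot_{\op}a^{+_{\op}})^{\ast_{\op}}=a^{+_{\op}}$, reads as $(ab)^{+}=a^{+}\implies(a^{\ast}b)^{+}=a^{\ast}$, which is condition $(2)$ here, and condition $(3)$ transfers immediately because $r_{a}$ in $S^{\op}$ equals $l_{a}$ in $S$.

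The one step demanding genuine care --- and the only real obstacle --- is the termwise translation of the nested identity $(1)$, because it alternates the two unary operations several times. Unwinding the generalized right ample identity for $S^{\op}$, $(b\cdot_{\op}(a\cdot_{\op}(b\cdot_{\op}a)^{\ast_{\op}})^{+_{\op}})^{\ast_{\op}}=(a\cdot_{\op}(b\cdot_{\op}a)^{\ast_{\op}})^{+_{\op}}$, one factor at a time using $a\cdot_{\op}x=xa$ together with $\ast_{\op}=+$ and $+_{\op}=\ast$, produces exactly $(((ab)^{+}a)^{\ast}b)^{+}=((ab)^{+}a)^{\ast}$, which is condition $(1)$ of the present proposition. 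I would perform this substitution explicitly, tracking the inner brackets, to be sure they align. Once the three translations are confirmed, the equivalence $(1)\Leftrightarrow(2)\Leftrightarrow(3)$ for $S$ is an immediate consequence of the same equivalence applied to $S^{\op}$.
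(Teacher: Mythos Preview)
Your proposal is correct and is precisely the approach the paper takes: the paper states \propref{Equivalent_characterizations_of_gra_dual} explicitly as ``the dual of \propref{Equivalent_characterizations_of_gra}'' and gives no separate proof. Your explicit verification that passage to $S^{\op}$ interchanges $\ast$ with $+$, $\Lt$ with $\Rt$, and $r_a$ with $l_a$, and your careful unwinding of the nested identity, simply make the intended duality argument rigorous.
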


Define a relation $\trianglelefteq_{r}$ on $S$ by the rule that
$a\trianglelefteq_{r}b$ if and only if $a=eb$ for some $e\in E$.
If we assume that $\mathcal{C}(S)$ is a category and the relation
$\trianglelefteq_{r}$ is principally finite we can define a function
$\varphi:\Bbbk S\to\Bbbk_{0}\mathcal{C}(S)$ on basis elements by 

\[
\varphi(a)=\sum_{c\trianglelefteq_{r}a}C(c).
\]
The dual of \thmref{isomorphism_theorem} is:
\begin{thm}
\label{thm:isomorphism_theorem-dual} Let $S$ be a reduced $E$-Fountain
semigroup and assume that $\trianglelefteq_{r}$ is contained in a
principally finite partial order. Then, the generalized left ample
condition holds if and only if $\mathcal{C}(S)$ is a category and
$\varphi$ is an isomorphism of $\Bbbk$-algebras.
\end{thm}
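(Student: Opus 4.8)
The plan is to deduce this statement from \thmref{isomorphism_theorem} by passing to the opposite semigroup $S^{\op}$. First I would observe that being a reduced $E$-Fountain semigroup is a self-dual notion: the defining condition $ef=e\iff fe=e$ on $E$ is symmetric, and every $\Lt$-class (respectively $\Rt$-class) of $S$ is an $\Rt$-class (respectively $\Lt$-class) of $S^{\op}$. Consequently, writing $a^{\ast}$, $a^{+}$ for the operations in $S$ and $a^{\ast_{\op}}$, $a^{+_{\op}}$ for those in $S^{\op}$, one has $a^{\ast_{\op}}=a^{+}$ and $a^{+_{\op}}=a^{\ast}$ for every $a$. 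So $S^{\op}$ is again a reduced $E$-Fountain semigroup, with the roles of $\ast$ and $+$ interchanged.

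Next I would translate each hypothesis and conclusion across this duality. Using the characterization in \propref{Equivalent_characterizations_of_gra_dual}(2), the generalized left ample condition for $S$ reads $(ab)^{+}=a^{+}\implies(a^{\ast}b)^{+}=a^{\ast}$; rewriting this in terms of $S^{\op}$ turns it into $(b\cdot_{\op}a)^{\ast_{\op}}=a^{\ast_{\op}}\implies(b\cdot_{\op}a^{+_{\op}})^{\ast_{\op}}=a^{+_{\op}}$, which is exactly condition~(2) of \propref{Equivalent_characterizations_of_gra}, i.e. the generalized \emph{right} ample condition for $S^{\op}$. Similarly, $a\trianglelefteq_{r}b$ in $S$ (meaning $a=eb$ for some $e\in E$) is precisely $a\trianglelefteq_{l}b$ in $S^{\op}$, so the hypothesis that $\trianglelefteq_{r}$ is contained in a principally finite partial order on $S$ is exactly the hypothesis of \thmref{isomorphism_theorem} applied to $S^{\op}$.

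The key identification is that of the categories. I would check that $C(a)\mapsto C_{\op}(a)$ defines an isomorphism $\mathcal{C}(S)\cong\mathcal{C}(S^{\op})^{\op}$ of graphs with zero morphisms: in $S^{\op}$ the morphism $C_{\op}(a)$ runs from $a^{\ast_{\op}}=a^{+}$ to $a^{+_{\op}}=a^{\ast}$, so reversing arrows lands it on $a^{\ast}\to a^{+}$, matching $C(a)$. Unwinding \defref{Definition_of_composition} for $S^{\op}$ and comparing, the composite $C_{\op}(b)\cdot^{\op}C_{\op}(a)$ is nonzero under exactly the same two conditions $(ba)^{\ast}=a^{\ast}$ and $(ba)^{+}=b^{+}$ that govern $C(b)\cdot C(a)$ in $\mathcal{C}(S)$, and both equal $C(ba)$ when nonzero. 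Hence $\mathcal{C}(S)$ is a category if and only if $\mathcal{C}(S^{\op})$ is, and $\Bbbk_{0}\mathcal{C}(S^{\op})\cong(\Bbbk_{0}\mathcal{C}(S))^{\op}$. Since also $\Bbbk S^{\op}=(\Bbbk S)^{\op}$, and the map $\varphi_{S^{\op}}$ from \thmref{isomorphism_theorem} for $S^{\op}$ coincides under these identifications with the map $\varphi$ in the present statement (its defining sum over $c\trianglelefteq_{l}a$ in $S^{\op}$ is the sum over $c\trianglelefteq_{r}a$ in $S$), the map $\varphi_{S^{\op}}$ is an algebra isomorphism $(\Bbbk S)^{\op}\to(\Bbbk_{0}\mathcal{C}(S))^{\op}$ exactly when $\varphi$ is an algebra isomorphism $\Bbbk S\to\Bbbk_{0}\mathcal{C}(S)$. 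Applying \thmref{isomorphism_theorem} to $S^{\op}$ then yields the claim.

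The main obstacle is bookkeeping rather than conceptual: one must verify carefully that the composition rule of \defref{Definition_of_composition} is genuinely self-dual under $C(a)\mapsto C_{\op}(a)$, so that no spurious asymmetry between the $\ast$-condition and the $+$-condition is introduced when reversing arrows, and that the two generalized ample conditions match up precisely through the characterizations in \propref{Equivalent_characterizations_of_gra} and \propref{Equivalent_characterizations_of_gra_dual}. Once this dictionary is in place, the result is a formal consequence of \thmref{isomorphism_theorem}, so in the write-up it would be reasonable to record the dictionary explicitly and then state that the proof is dual to that of \thmref{isomorphism_theorem}.
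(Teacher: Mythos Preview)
Your proposal is correct and is essentially the approach the paper takes: the paper simply records \thmref{isomorphism_theorem-dual} as ``the dual of \thmref{isomorphism_theorem}'' without further argument, and your passage to $S^{\op}$ is exactly the standard way to make that duality precise. Your verification that \defref{Definition_of_composition} is self-dual (so that $\mathcal{C}(S)\cong\mathcal{C}(S^{\op})^{\op}$) and that $\trianglelefteq_{r}$ in $S$ becomes $\trianglelefteq_{l}$ in $S^{\op}$ is the only real content, and you have identified it correctly.
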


Next, we want to prove that $\Op_{n}$ satisfies the generalized left
ample condition.
\begin{lem}
\label{lem:Image_lemma}Let $f,g\in\Op_{n}$. If $\im(fg)=\im(f)$
then $(f^{\ast}g)^{+}=f^{\ast}$.
\end{lem}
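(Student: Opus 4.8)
The plan is to translate the statement entirely into the image/kernel bookkeeping for $\Op_n$ set up above. Write $f=f_{Y,X}$ with $Y=\im(f)$ and $X$ the set of right endpoints (maximal elements) of the kernel classes of $f$, so that $n\in X$ and $|X|=|Y|$. Recall that $f^{\ast}=e_X$, since $f\,\Lc\,e_X$ and $\Lc=\Lt$ in $\Op_n$. The starting observation I would make is that $f$ and $f^{\ast}=e_X$ have the \emph{same} kernel: $\ker(e_X)$ is the partition of $[n]$ into the intervals cut out by $X$, which is exactly $\ker(f)$.

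The key point I would exploit is that this coincidence of kernels survives right multiplication by $g$. Indeed, for any $i,i'\in[n]$ we have $f(g(i))=f(g(i'))$ iff $g(i)$ and $g(i')$ lie in a common kernel class of $f$, iff they lie in a common kernel class of $e_X$, iff $e_X(g(i))=e_X(g(i'))$. Hence $\ker(fg)=\ker(f^{\ast}g)$, and therefore $|\im(fg)|=|\im(f^{\ast}g)|$, because for a function on a finite set the size of the image equals the number of kernel classes.

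Now I would feed in the hypothesis. From $\im(fg)=\im(f)=Y$ I get $|\im(f^{\ast}g)|=|\im(fg)|=|Y|=|X|$. On the other hand, $\im(f^{\ast}g)=e_X(\im g)\subseteq\im(e_X)=X$, and since $|\im(f^{\ast}g)|=|X|$ this inclusion must be an equality, forcing $\im(f^{\ast}g)=X$. Finally, using $h^{+}=e_{\widetilde{\im(h)}}$ together with $n\in X$ (so $\widetilde{X}=X$), I conclude $(f^{\ast}g)^{+}=e_{\widetilde{\im(f^{\ast}g)}}=e_{\widetilde{X}}=e_X=f^{\ast}$, which is the desired identity.

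I do not expect a serious obstacle: the whole argument amounts to counting kernel classes. The one step needing care is the reduction of the image hypothesis to a cardinality statement. It is tempting to argue directly with images and the intervals of $f$, but the clean route is to notice that $f$ and $f^{\ast}$ share a kernel, that this persists after composing with $g$, and that the resulting equality $|\im(f^{\ast}g)|=|X|$ pins down $\im(f^{\ast}g)$ exactly because it is a priori a subset of $X$ of the same cardinality.
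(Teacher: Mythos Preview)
Your proof is correct and rests on the same key observation as the paper's, namely that $\ker(f)=\ker(f^{\ast})$, from which one transfers the hypothesis about $\im(fg)$ to information about $\im(f^{\ast}g)$. The only difference is packaging: the paper notes that $\im(fg)=\im(f)$ is equivalent to $\im(g)$ meeting every kernel class of $f$, and since $\ker(f)=\ker(f^{\ast})$ this immediately yields $\im(f^{\ast}g)=\im(f^{\ast})$; you instead deduce $\ker(fg)=\ker(f^{\ast}g)$, pass to a cardinality equality, and then pin down $\im(f^{\ast}g)=X$ via the inclusion $\im(f^{\ast}g)\subseteq X$. Both routes are short; the paper's is marginally more direct, while yours avoids stating the ``meets every kernel class'' characterization explicitly.
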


\begin{proof}
First note that $\im(fg)=\im(f)$ if and only if $\im(g)$ intersects
every kernel class of $f$. Now, $\ker(f)=\ker(f^{\ast})$ so $\im(fg)=\im(f)$
implies that $g$ intersects every kernel class of $f^{\ast}$ and
therefore $\im(f^{\ast}g)=\im(f^{\ast})$. This implies 
\[
(f^{\ast}g)^{+}=f^{\ast}.
\]
\end{proof}
\begin{prop}
\label{prop:lef_ample_on}The monoid $\Op_{n}$ satisfies the generalized
left ample condition.
\end{prop}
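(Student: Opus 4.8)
The plan is to verify condition (2) of \propref{Equivalent_characterizations_of_gra_dual}: it suffices to show that $(fg)^{+}=f^{+}$ implies $(f^{\ast}g)^{+}=f^{\ast}$ for all $f,g\in\Op_{n}$. The first step is to translate both the hypothesis and the conclusion into statements about images. From the elementary observations above we have $h^{+}=e_{\widetilde{\im(h)}}$ for every $h\in\Op_{n}$, where $\widetilde{Z}=Z\cup\{n\}$, and the assignment $Z\mapsto e_{Z}$ (for $n\in Z$) is injective. Hence $(fg)^{+}=f^{+}$ is equivalent to $\im(fg)\cup\{n\}=\im(f)\cup\{n\}$, while the desired conclusion $(f^{\ast}g)^{+}=f^{\ast}$ is equivalent to $\im(f^{\ast}g)\cup\{n\}=X$, where $X=\im(f^{\ast})$ is the set of maximal elements of the kernel classes of $f$ (and $n\in X$ since the top class has maximum $n$).

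Since $\im(fg)=f(\im(g))\subseteq\im(f)$ always holds, the hypothesis forces $\im(f)\setminus\im(fg)\subseteq\{n\}$, so there are exactly two possibilities: either $\im(fg)=\im(f)$, or $n\in\im(f)$ and $\im(fg)=\im(f)\setminus\{n\}$. In the first case I would invoke \lemref{Image_lemma} directly: from $\im(fg)=\im(f)$ it yields $(f^{\ast}g)^{+}=f^{\ast}$ immediately, and we are done.

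The remaining case is where the argument requires genuine care, and it is the main obstacle: the operation $h\mapsto h^{+}$ ``forgives'' the point $n$ through the tilde operation, so the hypothesis $(fg)^{+}=f^{+}$ is strictly weaker than $\im(fg)=\im(f)$ and \lemref{Image_lemma} does not apply verbatim. Here I would argue directly along the lines of the proof of that lemma. Writing $K_{1},\dots,K_{l}$ for the kernel classes of $f$ with $f(K_{i})=y_{i}$ and $x_{i}=\max K_{i}$, the condition $\im(fg)=\im(f)\setminus\{n\}$ says precisely that $\im(g)$ meets every kernel class $K_{1},\dots,K_{l-1}$ but misses the top class $K_{l}=f^{-1}(n)$. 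Since $\ker(f^{\ast})=\ker(f)$ with $f^{\ast}(K_{i})=x_{i}$, the same intersection pattern gives $\im(f^{\ast}g)=\{x_{1},\dots,x_{l-1}\}=X\setminus\{n\}$, using $x_{l}=n$. Therefore $\im(f^{\ast}g)\cup\{n\}=X$, which is exactly $(f^{\ast}g)^{+}=f^{\ast}$, completing the proof.
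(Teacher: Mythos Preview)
Your proof is correct and follows essentially the same approach as the paper's: translate the hypothesis $(fg)^{+}=f^{+}$ into $\im(fg)\cup\{n\}=\im(f)\cup\{n\}$, split into the two cases $\im(fg)=\im(f)$ and $\im(fg)=\im(f)\setminus\{n\}$, handle the first via \lemref{Image_lemma}, and in the second use $\ker(f)=\ker(f^{\ast})$ to conclude $\im(f^{\ast}g)=\im(f^{\ast})\setminus\{n\}$. Your write-up is slightly more explicit about the translation step and the kernel-class bookkeeping, but the argument is the same.
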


\begin{proof}
Let $f,g\in\Op_{n}$ and assume $(fg)^{+}=f^{+}$. We need to prove
$(f^{\ast}g)^{+}=f^{\ast}$. The fact that $(fg)^{+}=f^{+}$ implies
that $\im(fg)\cup\{n\}=\im(f)\cup\{n\}$. Since $\im(fg)\subseteq\im(f)$
we have two options. If $\im(fg)=\im(f)$ then $(f^{\ast}g)^{+}=f^{\ast}$
by \lemref{Image_lemma}. The only other option is 
\[
\im(fg)=\im(f)\backslash\{n\}.
\]
In this case $\im(g)$ intersects all the kernel classes of $f$ except
the kernel class of $n$. Since $\ker(f)=\ker(f^{\ast})$, the image
$\im(g)$ intersects all the kernel classes of $f^{\ast}$ except
the kernel class of $n$. Therefore 
\[
\im(f^{\ast}g)=\im(f^{\ast})\backslash\{n\}
\]
which also implies $(f^{\ast}g)^{+}=f^{\ast}$ as required.
\end{proof}
We want to prove that $\trianglelefteq_{r}$ on $\Op_{n}$ is contained
in a partial order. The following lemma will be useful.
\begin{lem}
\label{lem:Maximal_kenels_contained}Let $f_{Y,X},f_{W,Z}\in\Op_{n}$.
If $f_{Y,X}\trianglelefteq_{r}f_{W,Z}$ then $X\subseteq Z$.
\end{lem}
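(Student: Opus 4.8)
The plan is to unwind the definition of $\trianglelefteq_{r}$ and recast the claim as a statement about kernels. By definition $f_{Y,X}\trianglelefteq_{r}f_{W,Z}$ means $f_{Y,X}=e_{U}f_{W,Z}$ for some idempotent $e_{U}\in E$. Recall from the elementary observations that $X$ is precisely the set of maximal elements of the kernel classes of $f_{Y,X}$, and $Z$ the set of maximal elements of the kernel classes of $f_{W,Z}$. So proving $X\subseteq Z$ amounts to showing that every maximal element of a kernel class of $f_{Y,X}$ is also the maximal element of some kernel class of $f_{W,Z}$.

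The key observation I would establish is that left multiplication by $e_{U}$ can only merge kernel classes, never split them. Concretely, since $f_{Y,X}=e_{U}\circ f_{W,Z}$ (composing right to left), whenever $f_{W,Z}(i_{1})=f_{W,Z}(i_{2})$ we also get $f_{Y,X}(i_{1})=e_{U}(f_{W,Z}(i_{1}))=e_{U}(f_{W,Z}(i_{2}))=f_{Y,X}(i_{2})$. Hence $\ker(f_{W,Z})\subseteq\ker(f_{Y,X})$, that is, each kernel class of $f_{Y,X}$ is a union of kernel classes of $f_{W,Z}$.

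It then remains to exploit the interval structure. Both $f_{Y,X}$ and $f_{W,Z}$ are order-preserving, so their kernel classes are intervals partitioning $[n]$. A kernel class of $f_{Y,X}$ is itself an interval and, by the previous step, a union of kernel-class intervals of $f_{W,Z}$; therefore it is a union of consecutive such intervals $I_{j}\cup\cdots\cup I_{k}$. Its maximal element is then the maximal element of the top block $I_{k}$, which by definition lies in $Z$. Ranging over all kernel classes of $f_{Y,X}$ shows that each element of $X$ lies in $Z$, giving $X\subseteq Z$.

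The argument is essentially routine; the only point requiring a little care is the direction of the kernel comparison — that left multiplication by an idempotent \emph{coarsens} rather than refines the kernel — together with the observation that a coarsening of one interval partition by another must merge \emph{consecutive} blocks, so that the new maximal elements are drawn from the old ones. Notably, no appeal to the generalized left ample condition is needed here: this is a purely combinatorial fact about $\Op_{n}$.
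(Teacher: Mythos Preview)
Your proof is correct and follows essentially the same approach as the paper: both unwind $\trianglelefteq_{r}$ to $f_{Y,X}=e\,f_{W,Z}$, observe that left composition with $e$ coarsens the kernel, and conclude that maximal elements of kernel classes of $f_{Y,X}$ lie among those of $f_{W,Z}$. The only cosmetic difference is that the paper uses the characterization ``$x\neq n$ is a maximal kernel-class element iff $f(x)\neq f(x+1)$'' and argues by contrapositive, whereas you phrase it via interval partitions and unions of consecutive blocks; these are two ways of saying the same thing.
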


\begin{proof}
The assumption that $f_{Y,X}\trianglelefteq_{r}f_{W,Z}$ implies that
\[
f_{Y,X}=ef_{W,Z}
\]
 for some idempotent $e\in E$. Recall that $X$ is the set of all
maximal elements of kernel classes of $f_{Y,X}$ and note that $x\neq n$
is a maximal element of a kernel class of $f\in\Op_{n}$ if and only
if $f(x)\neq f(x+1)$. If $f_{W,Z}(x)=f_{W,Z}(x+1)$ then 
\[
f_{Y,X}(x)=ef_{W,Z}(x)=ef_{W,Z}(x+1)=f_{Y,X}(x+1).
\]
Therefore if $x\neq n$ is a maximal element of a kernel class of
$f_{Y,X}$ then it is a maximal element of a kernel class of $f_{W,Z}$.
With the fact that $n\in X,Z$ we deduce that $X\subseteq Z$.
\end{proof}
Let $Y=\{y_{1},\ldots,y_{k}\}$, $W=\{w_{1},\ldots,w_{k}\}$ (arranged
in increasing order) be two subsets of $[n]$ of size $k$. We write
$Y\geq W$ if $y_{i}\geq w_{i}$ for every $i$.
\begin{lem}
\label{lem:Contained_in_principally_finite}The relation $\trianglelefteq_{r}$
on $\Op_{n}$ is contained in a partial order.
\end{lem}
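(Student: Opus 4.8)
The plan is to exhibit a partial order $\preceq$ on $\Op_{n}$ with $\trianglelefteq_{r}\,\subseteq\,\preceq$; since $\Op_{n}$ is finite, any such order is automatically principally finite, so this is all that is required (and it feeds directly into the dual isomorphism theorem). The slickest choice is the reverse pointwise order: declare $f\preceq g$ if and only if $f(i)\geq g(i)$ for every $i\in[n]$. This is visibly reflexive, and it is antisymmetric and transitive precisely because the usual order on $[n]$ is; so it is a genuine partial order on $\Op_{n}$ (indeed on all of $\T_{n}$). No clever choice of order is needed.

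The only thing to check is containment, and here the single useful fact is that every idempotent $e_{U}\in E$ is \emph{extensive}: since $e_{U}(i)=\min\{u\in U\mid i\leq u\}$, we have $e_{U}(i)\geq i$ for all $i$. Now suppose $f_{Y,X}\trianglelefteq_{r}f_{W,Z}$, so that by definition $f_{Y,X}=e_{U}f_{W,Z}$ for some $e_{U}\in E$. Then for every $i\in[n]$
\[
f_{Y,X}(i)=e_{U}\bigl(f_{W,Z}(i)\bigr)\geq f_{W,Z}(i),
\]
whence $f_{Y,X}\preceq f_{W,Z}$. This already proves the lemma; notably it uses neither \lemref{Maximal_kenels_contained} nor the componentwise order $Y\geq W$, and there is essentially no obstacle to overcome.

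Because the surrounding development has just established $X\subseteq Z$ (\lemref{Maximal_kenels_contained}) and introduced the order $Y\geq W$ on equal-sized sets, I would also record the more combinatorial partial order these ingredients point to, in case an explicit description in terms of the data $(Y,X)$ is wanted later: declare $f_{Y,X}\preceq f_{W,Z}$ exactly when $X\subsetneq Z$, or when $X=Z$ and $Y\geq W$. For containment one invokes \lemref{Maximal_kenels_contained} to get $X\subseteq Z$; if the inclusion is strict we are done, while if $X=Z$ then $|Y|=|X|=|Z|=|W|$, so $Y=\im(f_{Y,X})=e_{U}(\im f_{W,Z})=e_{U}(W)$ and $e_{U}$ restricts to an order-preserving bijection $W\to Y$ with $e_{U}(w)\geq w$, forcing $y_{i}\geq w_{i}$ for all $i$ and hence $Y\geq W$. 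In this second formulation the main (purely bookkeeping) obstacle is verifying that the two-case relation is transitive and antisymmetric; the only slightly delicate point is the mixed case where one step is strict in the $X$-coordinate and the other merely refines $Y$, which is dispatched by noting that $X_{1}\subseteq X_{2}\subseteq X_{3}$ with any strict inclusion yields $X_{1}\subsetneq X_{3}$. Either way, containment is the substantive step and it is short; the real content of the lemma is simply the extensivity of the idempotents of $E$.
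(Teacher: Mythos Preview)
Your first argument is correct and genuinely simpler than the paper's: you use the reverse pointwise order $f\preceq g\iff f(i)\geq g(i)$ for all $i$, and containment follows in one line from the extensivity of the idempotents $e_{U}$. This bypasses \lemref{Maximal_kenels_contained} entirely and works verbatim for any semigroup of functions on $[n]$ whose distinguished idempotents are extensive.

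Your second formulation is exactly the paper's proof: it defines the lexicographic order $f_{Y,X}\preceq f_{W,Z}\iff X\subsetneq Z$ or ($X=Z$ and $Y\geq W$), invokes \lemref{Maximal_kenels_contained} for $X\subseteq Z$, and in the case $X=Z$ deduces $y_{i}\geq w_{i}$ from extensivity. The paper writes this last step as $y_{i}=f_{Y,X}(x_{i})=ef_{W,X}(x_{i})\geq f_{W,X}(x_{i})=w_{i}$, which is marginally more direct than your route via $Y=e_{U}(W)$ and the bijectivity of $e_{U}|_{W}$, but the content is identical. What the paper's version buys is a partial order phrased in terms of the combinatorial data $(Y,X)$; what your pointwise version buys is a shorter, self-contained proof that does not depend on the preceding lemma.
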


\begin{proof}
Define a relation $\preceq$ on $\Op_{n}$ by $f_{Y,X}\preceq f_{W,Z}$
if $X\subsetneqq Z$ or ($X=Z$ and $Y\geq W$) . It is easy to verify
that $\preceq$ is indeed a partial order on $\Op_{n}$ (it is the
lexicographic order on the Cartesian product of $\subseteq$ and $\leq$).
We prove that $\trianglelefteq_{r}$ is contained in $\preceq$. If
$f_{Y,X},f_{W,Z}\in\Op_{n}$ such that $f_{Y,X}\trianglelefteq_{r}f_{W,Z}$
then $X\subseteq Z$ by \lemref{Maximal_kenels_contained}. If $X\subsetneqq Z$
then we are done. If $X=Z$ then $|Y|=|X|=|Z|=|W|$, say 
\[
Z=X=\{x_{1},\ldots,x_{k}\},\quad Y=\{y_{1},\ldots,y_{k}\},\quad W=\{w_{1},\ldots,w_{k}\}
\]
Since $f_{Y,X}\trianglelefteq_{r}f_{W,Z}$ then $f_{Y,X}=ef_{W,X}$
for some idempotent $e\in E$. Now 
\[
y_{i}=f_{Y,X}(x_{i})=ef_{W,X}(x_{i})\geq f_{W,X}(x_{i})=w_{i}
\]
so indeed $Y\geq W$ and this finishes the proof\@.
\end{proof}
According to \thmref{isomorphism_theorem-dual}, \propref{lef_ample_on}
and \lemref{Contained_in_principally_finite} we obtain:
\begin{thm}
\label{thm:Order_preserving_isomorphism}The graph $\mathcal{C}(\Op_{n})$
with composition as defined in \defref{Definition_of_composition}
is a category (with zero morphisms) and for every commutative unital
ring $\Bbbk$ there is an isomorphism of algebras $\Bbbk\Op_{n}\simeq\Bbbk_{0}\mathcal{C}(\Op_{n})$.
\end{thm}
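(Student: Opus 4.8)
The plan is to assemble the three ingredients established immediately before the statement and feed them into the dual isomorphism theorem; the genuine work has already been carried out in \propref{lef_ample_on} and \lemref{Contained_in_principally_finite}, so the proof is a verification that every hypothesis of \thmref{isomorphism_theorem-dual} holds for $S=\Op_{n}$ with the distinguished idempotent set $E=\{e_{X}\mid n\in X\}$. First I would recall from the elementary observations that $\Op_{n}$ is a reduced $E$-Fountain semigroup: every $\Lt$-class contains the idempotent $e_{X}$ and every $\Rt$-class contains the idempotent $e_{\widetilde{Y}}$ (for the function $f_{Y,X}$), while $e_{Y}e_{X}=e_{X}\iff e_{X}e_{Y}=e_{X}$ gives the reduced condition. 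This is the standing hypothesis underlying all of \secref{Isomorphism} and its duals.

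Next I would check the partial order hypothesis. \thmref{isomorphism_theorem-dual} requires $\trianglelefteq_{r}$ to be contained in a \emph{principally finite} partial order, whereas \lemref{Contained_in_principally_finite} only asserts containment in the partial order $\preceq$. The gap is closed for free: since $\Op_{n}$ is a finite monoid, every relation on it---in particular $\preceq$---is principally finite, so the stronger hypothesis is automatic. Combined with \propref{lef_ample_on}, which states that $\Op_{n}$ satisfies the generalized left ample condition, all three premises of the dual theorem are now in place.

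Finally I would invoke \thmref{isomorphism_theorem-dual} directly, which yields both conclusions at once: the graph $\mathcal{C}(\Op_{n})$ equipped with the composition of \defref{Definition_of_composition} is a category with zero morphisms, and the $\Bbbk$-module homomorphism $\varphi$ is an isomorphism of $\Bbbk$-algebras, giving $\Bbbk\Op_{n}\simeq\Bbbk_{0}\mathcal{C}(\Op_{n})$ for every commutative unital ring $\Bbbk$. Because each step is an application of an already-proved result, I do not anticipate any real obstacle; the only point warranting a second glance is the passage from ``partial order'' to ``principally finite partial order,'' and this is settled immediately by the finiteness of $\Op_{n}$.
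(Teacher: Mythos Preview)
Your proposal is correct and follows essentially the same approach as the paper: invoke \thmref{isomorphism_theorem-dual} after checking its hypotheses via \propref{lef_ample_on} and \lemref{Contained_in_principally_finite}. The paper's proof is just the one-line citation of these three results, and your observation that finiteness of $\Op_{n}$ upgrades ``partial order'' to ``principally finite partial order'' fills the only implicit step.
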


\begin{rem}
The fact that $\Bbbk\Op_{n}$ and $\Bbbk_{0}\mathcal{C}(\Op_{n})$
are Morita equivalent (i.e., their module categories are equivalent)
follows from the well-known Dold-Kan correspondence \cite{Dold1958,Kan1958}.
It is possible to show that $\Bbbk\Op_{n}$ is Morita equivalent to
the algebra of the category of sets $\{1,\ldots,k\}$ for $1\leq k\leq n$
and all the order-preserving functions between them - this is the
simplex category \cite[Section VII 5]{Mac_Lane1998} truncated at
$n$. According to the Dold-Kan theorem, this algebra is Morita equivalent
to the contracted algebra of the skeleton of $\mathcal{C}(\Op_{n})$
which is Morita equivalent to $\Bbbk_{0}\mathcal{C}(\Op_{n})$ itself
(see \cite[Proposition 2.2]{Webb2007}). The contracted algebra of
the skeleton is the algebra of a straight line quiver with $n$ vertices
and the relations are that each composition of successive arrows equals
$0$. See also the Lack-Street equivalence \cite{Lack2015} and \cite[Example 4.9]{Street2024}.
\end{rem}

\paragraph*{Associated category}

We want to show now that we can give a simple description of the composition
in $\mathcal{C}(\Op_{n})$. We start with some useful lemmas.
\begin{lem}
\label{lem:Composition_in_graph1}Let $f_{W,Z},f_{Y,X}\in\Op_{n}$.
If $(f_{W,Z})^{\ast}=(f_{Y,X})^{+}$ and $f_{Y,X}\in\OF_{n}$ then
$f_{W,Z}f_{Y,X}=f_{W,X}$.
\end{lem}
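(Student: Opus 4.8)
The plan is to reduce the two hypotheses to a single equality of sets and then invoke the composition formula $f_{Z,Y}f_{Y,X}=f_{Z,X}$. First I would recall from the elementary observations preceding the lemma that $(f_{W,Z})^{\ast}=e_{Z}$ (because $f_{W,Z}\,\Lt\,e_{Z}$, and $e_{Z}$ is the unique idempotent of $E$ in that $\Lt$-class) and that $(f_{Y,X})^{+}=e_{\widetilde{Y}}$ (because $f_{Y,X}\,\Rt\,e_{\widetilde{Y}}$), where $\widetilde{Y}=Y\cup\{n\}$.

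The hypothesis $(f_{W,Z})^{\ast}=(f_{Y,X})^{+}$ then reads $e_{Z}=e_{\widetilde{Y}}$. Since the assignment $A\mapsto e_{A}=f_{A,A}$ is injective on the subsets of $[n]$ containing $n$ (two such idempotents coincide iff the associated pairs $(A,A)$ and $(B,B)$ coincide, by the one-to-one correspondence between functions in $\Op_{n}$ and pairs of sets), this forces $Z=\widetilde{Y}=Y\cup\{n\}$. The second hypothesis $f_{Y,X}\in\OF_{n}$ means precisely $n\in Y$, so $\widetilde{Y}=Y$ and hence $Z=Y$. With $Z=Y$ in hand I would simply apply the composition rule to conclude
\[
f_{W,Z}f_{Y,X}=f_{W,Y}f_{Y,X}=f_{W,X},
\]
as required; all the side conditions are automatically consistent, since $Z=Y$ gives $|W|=|Z|=|Y|=|X|$ and the membership $n\in X$ is part of the data of $f_{Y,X}$.

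I do not expect a genuine obstacle here: the substance of the lemma is the bookkeeping translation between the semigroup-theoretic invariants $(\cdot)^{\ast}$, $(\cdot)^{+}$ and the combinatorial data (image and kernel) encoded in the subscripts. The one point that requires a moment's care is to use the assumption $f_{Y,X}\in\OF_{n}$ exactly once, precisely to collapse $\widetilde{Y}$ back to $Y$; without it one obtains only $Z=Y\cup\{n\}$, which in general differs from $Y$, the middle indices then fail to match, and the clean identity $f_{W,Z}f_{Y,X}=f_{W,X}$ need not hold.
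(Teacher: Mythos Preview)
Your proof is correct and follows essentially the same route as the paper: both arguments translate $(f_{W,Z})^{\ast}=(f_{Y,X})^{+}$ into $Z=\widetilde{Y}$, use $f_{Y,X}\in\OF_{n}$ to collapse $\widetilde{Y}$ to $Y$, and then apply the composition rule $f_{W,Y}f_{Y,X}=f_{W,X}$. You have simply made explicit the intermediate step (injectivity of $A\mapsto e_{A}$) that the paper leaves implicit.
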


\begin{proof}
If $(f_{W,Z})^{\ast}=(f_{Y,X})^{+}$ then $Z=\widetilde{Y}$ (recall
that $\widetilde{Y}=Y\cup\{n\}$) but $f_{Y,X}\in\OF_{n}$ implies
$n\in Y$ so $\widetilde{Y}=Y$. Therefore $Z=Y$ and $f_{W,Z}f_{Y,X}=f_{W,X}$.
\end{proof}
For every non-empty $W\subseteq[n]$ we set $\max(W)$ to be the maximal
element of $W$ and $\mbox{\ensuremath{W^{\prime}=W\backslash\{\max(W)\}}}$.
\begin{lem}
\label{lem:Composition_in_graph2}Let $f_{W,Z},f_{Y,X}\in\Op_{n}$.
If $(f_{W,Z})^{\ast}=(f_{Y,X})^{+}$ and $f_{Y,X}\notin\OF_{n}$ then
$f_{W,Z}f_{Y,X}=f_{W^{\prime},X}$.
\end{lem}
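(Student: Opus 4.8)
The plan is to convert the two hypotheses into set data and then compute the composite directly, as in the proof of \lemref{Composition_in_graph1} but now keeping track of the extra point $n$. First I would note that $(f_{W,Z})^{\ast}=(f_{Y,X})^{+}$ means $e_{Z}=e_{\widetilde{Y}}$, hence $Z=\widetilde{Y}=Y\cup\{n\}$, while $f_{Y,X}\notin\OF_{n}$ means $n\notin Y$. Thus $Z$ is exactly $Y$ with the additional top point $n$ adjoined and $|Z|=|Y|+1$. Writing $Y=\{y_{1}<\cdots<y_{k}\}$ with every $y_{i}<n$, I would record $Z=\{y_{1}<\cdots<y_{k}<n\}$ and, since $|W|=|Z|=k+1$, $W=\{w_{1}<\cdots<w_{k+1}\}$.

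Unlike in \lemref{Composition_in_graph1}, here $Z\neq Y$, so the clean rule $f_{Z,Y}f_{Y,X}=f_{Z,X}$ is unavailable and I must instead understand how $f_{W,Z}$ acts on $\im(f_{Y,X})=Y$. Using the defining block description of $f_{W,Z}$, its $j$-th block is the interval $(z_{j-1},z_{j}]$, which for $1\le j\le k$ equals $(y_{j-1},y_{j}]$ and so contains $y_{j}$; hence $f_{W,Z}(y_{j})=w_{j}$ for each such $j$. The only block producing the top value $w_{k+1}=\max(W)$ is $(y_{k},n]$, which contains no element of $Y$ since every $y_{j}\le y_{k}<n$. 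Consequently $f_{W,Z}$ restricted to $Y$ is the injection $y_{j}\mapsto w_{j}$ with image $\{w_{1},\dots,w_{k}\}=W\setminus\{\max(W)\}=W'$.

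Finally I would reassemble the composite. Because $f_{Y,X}$ maps onto $Y$ and $f_{W,Z}$ is injective on $Y$, post-composition leaves the kernel unchanged, so $\ker(f_{W,Z}f_{Y,X})=\ker(f_{Y,X})$ and the maximal-element set of its kernel classes is again $X$; its image is $f_{W,Z}(Y)=W'$. Since $n\in X$ and $|W'|=k=|X|$, the pair $(W',X)$ is admissible, and by the one-to-one correspondence between order-preserving functions and such pairs this forces $f_{W,Z}f_{Y,X}=f_{W',X}$, as required.

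The only point that will require care is the off-by-one bookkeeping around $n$: one must check that each $y_{j}$ falls in the $j$-th block of $f_{W,Z}$ (so the labels of $W$ are not shifted) and that the single extra block $(y_{k},n]$---the one responsible for $\max(W)$---is precisely the block missed by the image $Y$. Once this is verified, everything else is a routine translation back through the $f_{Y,X}$ correspondence.
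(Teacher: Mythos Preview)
Your argument is correct and follows the same line as the paper's proof: both start from $Z=\widetilde{Y}=Y\cup\{n\}$ with $n\notin Y$, observe that the only preimage of $\max(W)$ under $f_{W,Z}$ is $n$ (so $\max(W)$ is missed by the composite), and that every other element of $W$ is hit via some $y_j\in Y$. The paper stops there and declares the conclusion ``clear,'' whereas you make explicit the injectivity of $f_{W,Z}$ on $Y$ and hence the preservation of $\ker(f_{Y,X})$, which is the small detail the paper sweeps under the rug; your version is the more careful one, but the idea is identical.
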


\begin{proof}
If $(f_{W,Z})^{\ast}=(f_{Y,X})^{+}$ then $Z=\widetilde{Y}$ but $f_{Y,X}\notin\OF_{n}$
implies $n\notin Y$ so $f_{W,Z}(n)=\max(W)$ is not in the image
of $f_{W,Z}f_{Y,X}$. Every $w\in W^{\prime}$ is the image of some
$z\in Z\backslash\{n\}=Y$ and therefore $w$ is in the image of $f_{W,Z}f_{Y,X}$.
It is now clear that $f_{W,Z}f_{Y,X}=f_{W^{\prime},X}$.
\end{proof}
\begin{lem}
\label{lem:Closedness_of_good_arrows}Let $f_{W,Z},f_{Y,X}\in\Op_{n}$
such that $(f_{W,Z})^{\ast}=(f_{Y,X})^{+}$. If $f_{W,Z}\in\OF_{n}$
and $f_{Y,X}\notin\OF_{n}$ or vice versa then $f_{W,Z}f_{Y,X}\notin\OF_{n}$.
\end{lem}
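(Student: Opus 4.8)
The plan is to reduce directly to the two explicit composition formulas established in \lemref{Composition_in_graph1} and \lemref{Composition_in_graph2}, case-splitting according to which of the two factors lies in $\OF_n$. The crucial elementary fact is that a function $f_{A,B}\in\Op_n$ belongs to $\OF_n$ exactly when $n\in A=\im(f_{A,B})$; in particular $f_{W,Z}\in\OF_n\iff n\in W$, and the membership of the product in $\OF_n$ is governed entirely by whether $n$ survives in the first coordinate of the reduced form.

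First I would treat the case $f_{Y,X}\notin\OF_n$ while $f_{W,Z}\in\OF_n$. Here \lemref{Composition_in_graph2} applies to the right factor and yields $f_{W,Z}f_{Y,X}=f_{W^{\prime},X}$, where $W^{\prime}=W\backslash\{\max(W)\}$. Since $f_{W,Z}\in\OF_n$ forces $n\in W$, and $n$ is automatically the maximal element of any subset of $[n]$ containing it, we get $\max(W)=n$ and hence $W^{\prime}=W\backslash\{n\}$. As $n\notin W^{\prime}$, the product $f_{W^{\prime},X}$ lies outside $\OF_n$.

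In the symmetric case $f_{Y,X}\in\OF_n$ and $f_{W,Z}\notin\OF_n$, the right factor is now in $\OF_n$, so \lemref{Composition_in_graph1} gives $f_{W,Z}f_{Y,X}=f_{W,X}$. Now $f_{W,Z}\notin\OF_n$ means $n\notin W$, so the image of the product, which equals $W$, still omits $n$; thus $f_{W,X}\notin\OF_n$. Since the hypothesis is precisely that exactly one of the two factors lies in $\OF_n$, these two cases are exhaustive.

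I anticipate no genuine obstacle: both cases collapse immediately once the correct composition lemma is invoked, and the only point requiring a moment's thought is that $n\in W$ implies $\max(W)=n$, which is what guarantees that the truncation $W\mapsto W^{\prime}$ removes precisely the element $n$. The whole argument is a short bookkeeping exercise built on the two preceding lemmas together with the characterization of $\OF_n$ in terms of containing $n$ in the image.
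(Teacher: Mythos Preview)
Your proof is correct and follows essentially the same route as the paper: a case split on which factor lies in $\OF_n$, with the first case handled via \lemref{Composition_in_graph2} exactly as you do. The only cosmetic difference is in the second case: where you invoke \lemref{Composition_in_graph1} to compute $f_{W,Z}f_{Y,X}=f_{W,X}$, the paper instead uses the more primitive observation that $\im(f_{W,Z}f_{Y,X})\subseteq\im(f_{W,Z})=W$, which already excludes $n$ without needing to identify the product explicitly.
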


\begin{proof}
If $f_{W,Z}\in\OF_{n}$ and $f_{Y,X}\notin\OF_{n}$ then $\max(W)=n$
and $W^{\prime}=W\backslash\{n\}$. By \lemref{Composition_in_graph2}
we deduce $f_{W,Z}f_{Y,X}=f_{W\backslash\{n\},X}\notin\OF_{n}$. If
$f_{W,Z}\notin\OF_{n}$ then the image of $f_{W,Z}f_{Y,X}$ which
is contained in the image of $f_{W,Z}$ does not contain $n$ so $f_{W,Z}f_{Y,X}\notin\OF_{n}.$
\end{proof}
\begin{lem}
\label{lem:Almost_left_congruence}Let $f_{W,Z},f_{Y,X}\in\Op_{n}$.
If $(f_{W,Z})^{\ast}=(f_{Y,X})^{+}$ and at least one of $f_{W,Z},f_{Y,X}$
is in $\OF_{n}$ then $(f_{W,Z}f_{Y,X})^{+}=(f_{W,Z})^{+}$. If $f_{W,Z},f_{Y,X}\notin\OF_{n}$
then $(f_{W,Z}f_{Y,X})^{+}\neq(f_{W,Z})^{+}$.
\end{lem}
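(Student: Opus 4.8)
The plan is to reduce both assertions to a single structural fact established in the elementary observations: the $+$-operation on $\Op_{n}$ depends only on the image. Indeed, since $e_{Z}h=h$ if and only if $\im(h)\subseteq Z$, the minimal left identity of any $h\in\Op_{n}$ is $h^{+}=e_{\widetilde{V}}$, where $V=\im(h)$ and $\widetilde{V}=V\cup\{n\}$. Writing $g=f_{W,Z}f_{Y,X}$ and recalling $\im(f_{W,Z})=W$, the equality $g^{+}=(f_{W,Z})^{+}$ holds if and only if $\im(g)\cup\{n\}=W\cup\{n\}$, and distinct subsets of $[n]$ containing $n$ index distinct elements of $E$. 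So the entire lemma becomes a comparison of the two sets $\im(g)\cup\{n\}$ and $W\cup\{n\}$, and the image of the product $g$ is exactly what \lemref{Composition_in_graph1} and \lemref{Composition_in_graph2} compute.

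For the first assertion I would split according to which factor lies in $\OF_{n}$. If $f_{Y,X}\in\OF_{n}$, then \lemref{Composition_in_graph1} gives $g=f_{W,X}$, so $\im(g)=W$ and the two index sets agree exactly, whence $g^{+}=(f_{W,Z})^{+}$. If instead $f_{W,Z}\in\OF_{n}$ while $f_{Y,X}\notin\OF_{n}$, then \lemref{Composition_in_graph2} gives $g=f_{W',X}$ with $W'=W\setminus\{\max(W)\}$; but $f_{W,Z}\in\OF_{n}$ forces $n\in W$, hence $\max(W)=n$ and $W'=W\setminus\{n\}$. Re-adjoining $n$ gives $\im(g)\cup\{n\}=(W\setminus\{n\})\cup\{n\}=W\cup\{n\}$, so again the two $+$'s coincide.

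For the second assertion, with both $f_{W,Z},f_{Y,X}\notin\OF_{n}$, \lemref{Composition_in_graph2} again yields $g=f_{W',X}$ with $W'=W\setminus\{\max(W)\}$, so $\im(g)\cup\{n\}=(W\setminus\{\max(W)\})\cup\{n\}$. The decisive point is that $f_{W,Z}\notin\OF_{n}$ now means $n\notin W$, so $\max(W)<n$ and in particular $\max(W)\neq n$. Thus $\max(W)$ belongs to $W\cup\{n\}$ but has been deleted from $\im(g)\cup\{n\}$, so the two index sets genuinely differ and $g^{+}\neq(f_{W,Z})^{+}$, as claimed. (A quick cardinality check, using $Z=\widetilde{Y}$ and $n\notin Y$, confirms $|W'|=|X|\geq 1$, so $W'$ is nonempty and $f_{W',X}$ is a legitimate element of $\Op_{n}$.)

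All the computations here are immediate once \lemref{Composition_in_graph1} and \lemref{Composition_in_graph2} are in hand, so there is no serious obstacle; the only thing demanding care is the bookkeeping of the maximal element $\max(W)$. This is precisely the hinge between the two halves of the statement: membership in $\OF_{n}$ is equivalent to $\max$ of the image equalling $n$, and it is exactly this $n$-versus-$(<n)$ dichotomy that determines whether deleting $\max(W)$ becomes invisible (when it equals $n$, and is restored by adjoining $n$) or visible (when it is strictly below $n$) after re-adjoining $n$.
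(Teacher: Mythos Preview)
Your proof is correct and follows essentially the same approach as the paper: both arguments split on whether $f_{Y,X}\in\OF_{n}$, invoke \lemref{Composition_in_graph1} and \lemref{Composition_in_graph2} to compute the product as $f_{W,X}$ or $f_{W',X}$, and then compare $\widetilde{W'}$ with $\widetilde{W}$ via the dichotomy $\max(W)=n$ versus $\max(W)\neq n$. Your version just spells out a bit more explicitly why the $+$-operation reduces to comparing $\im(g)\cup\{n\}$ with $W\cup\{n\}$.
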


\begin{proof}
If $f_{Y,X}\in\OF_{n}$ then \lemref{Composition_in_graph1} implies
$f_{W,Z}f_{Y,X}=f_{W,X}$ so $(f_{W,Z}f_{Y,X})^{+}=e_{\widetilde{W}}=(f_{W,Z})^{+}$.
If $f_{Y,X}\notin\OF_{n}$ then $f_{W,Z}f_{Y,X}=f_{W^{\prime},X}$
by \lemref{Composition_in_graph2} so $(f_{W,Z}f_{Y,X})^{+}=e_{\widetilde{W^{\prime}}}$
where $(f_{W,Z})^{+}=e_{\widetilde{W}}$. If $f_{W,Z}\in\OF_{n}$
then $\max(W)=n$ and $W^{\prime}=W\backslash\{n\}$ so $\widetilde{W^{\prime}}=W=\widetilde{W}$
and we obtain $(f_{W,Z}f_{Y,X})^{+}=(f_{W,Z})^{+}$. If $f_{W,Z},f_{Y,X}\notin\OF_{n}$
then $\max(W)\neq n$ so $\max(W)\notin\widetilde{W^{\prime}}$ hence
$\widetilde{W^{\prime}}\neq\widetilde{W}$ and $\mbox{\ensuremath{(f_{W,Z}f_{Y,X})^{+}\neq(f_{W,Z})^{+}}}$. 
\end{proof}
The graph $\mathcal{C}(\Op_{n})$ associated with $\Op_{n}$ can be
described as follows. The objects are in one-to-one correspondence
with the set of idempotents $E=\{e_{X}\}$ (where $X\subseteq[n]$
such that $n\in X$). For every $f_{Y,X}\in\Op_{n}$ we associate
a non-zero morphism $C(f_{Y,X})$ from $e_{X}$ to $e_{\widetilde{Y}}$.
Before we turn to describe the composition we emphasize an important
observation. Let $X,Y\subseteq[n]$ such that $n\in X,Y$. If $|Y|=|X|$
then $C(f_{Y,X})$ is the unique non-zero morphism from $e_{X}$ to
$e_{Y}$ (note that $f_{Y,X}\in\OF_{n}$ in this case). If $|Y|=|X|+1$
then $C(f_{Y^{\prime},X})$ is the unique non-zero morphism from $e_{X}$
to $e_{Y}$ (note that $Y^{\prime}=Y\backslash\{n\}$ and $f_{Y^{\prime},X}\notin\OF_{n}$
in this case). If $|Y|\notin\{|X|,|X|+1\}$ then there are no non-zero
morphisms from $e_{X}$ to $e_{Y}$. 
\begin{example}
The graph $\mathcal{C}(\Op_{3})$ has the four objects $e_{\{3\}},e_{\{1,3\}},e_{\{2,3\}},e_{\{1,2,3\}}$
and $10$ non-zero morphisms. If we naturally draw $e_{Y}$ above
$e_{X}$ if $|Y|>|X|$ and ignore the identity and zero morphisms
then the graph $\mathcal{C}(\Op_{3})$ is the following graph:

\begin{center} \begin{tikzpicture}\path (0,6) node (4) {$e_{\{1,2,3\}}$};  \path (-3,3) node (2) {$e_{\{1,3\}}$}; \path (3,3) node (3) {$e_{\{2,3\}}$}; \path (0,0) node (1) {$e_{\{3\}}$};   \draw[->] (2) to[out=15, in =165] node[midway,above]{$f_{\{2,3\},\{1,3\}}$}(3); \draw[->] (3) to[out=195, in =345] node[midway,below]{$f_{\{1,3\},\{2,3\}}$}(2); \draw[->] (2) to node[midway,left]{$f_{\{1,2\},\{1,3\}}$}(4); \draw[->] (3) to node[midway,right]{$f_{\{1,2\},\{2,3\}}$}(4); \draw[->] (1) to node[midway,left]{$f_{\{1\},\{3\}}$}(2); \draw[->] (1) to node[midway,right]{$f_{\{2\},\{3\}}$}(3);\end{tikzpicture}\end{center}
\end{example}

Recall that composition in $\mathcal{C}(\Op_{n})$ of two non-zero
morphisms $C(g),C(f)$ with $g^{\ast}=f^{+}$ is defined by: 
\[
C(g)\cdot C(f)=\begin{cases}
C(gf) & (gf)^{\ast}=f^{\ast},\quad(gf)^{+}=g^{+}\\
0 & \text{otherwise}.
\end{cases}
\]
We can simplify this description. Since $\Op_{n}$ satisfies the right
congruence condition, $g^{\ast}=f^{+}\implies(gf)^{\ast}=f^{\ast}$
so we can drop the requirement for $(gf)^{\ast}=f^{\ast}$. From \lemref{Almost_left_congruence}
we deduce that $g^{\ast}=f^{+}$ implies $(gf)^{+}=g^{+}$ if at least
one of $g,f$ is in $\OF_{n}$ and $(gf)^{+}\neq g^{+}$ otherwise.
Therefore, we obtain:
\begin{lem}
\label{lem:Partial_composition_description}Composition in $\mathcal{C}(\Op_{n})$
of two non-zero morphisms $C(g),C(f)$ with $g^{\ast}=f^{+}$ can
be described by 
\[
C(g)\cdot C(f)=\begin{cases}
C(gf) & f\in\OF_{n}\text{ or }g\in\OF_{n}\\
0 & \text{otherwise}.
\end{cases}
\]
\end{lem}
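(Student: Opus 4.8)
The plan is to read off the result directly from \defref{Definition_of_composition} by simplifying its two defining conditions separately, using facts already established for $\Op_n$. Recall that $C(g)\cdot C(f)$ equals $C(gf)$ precisely when both $(gf)^{\ast}=f^{\ast}$ and $(gf)^{+}=g^{+}$ hold, and is $0$ otherwise. So the entire task is to show that, under the standing hypothesis $g^{\ast}=f^{+}$, the first condition is automatic while the second is governed exactly by membership in $\OF_{n}$.

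For the first condition I would invoke the right congruence condition, which was established for $\Op_{n}$ in the ``Elementary observations'' paragraph (there it is shown that $\Lc=\Lt$ on $\Op_{n}$, hence $\Lt$ is a right congruence, which is equivalent to the identity $(ab)^{\ast}=(a^{\ast}b)^{\ast}$). Applying this with the hypothesis $g^{\ast}=f^{+}$ gives
\[
(gf)^{\ast}=(g^{\ast}f)^{\ast}=(f^{+}f)^{\ast}=f^{\ast},
\]
so the requirement $(gf)^{\ast}=f^{\ast}$ in \defref{Definition_of_composition} holds unconditionally and may be discarded.

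For the second condition I would simply quote \lemref{Almost_left_congruence}, which was proved precisely for this purpose: assuming $g^{\ast}=f^{+}$, it states that $(gf)^{+}=g^{+}$ when at least one of $g,f$ lies in $\OF_{n}$, and $(gf)^{+}\neq g^{+}$ when neither does. Combining the two observations, the pair of defining conditions of \defref{Definition_of_composition} is satisfied exactly when $f\in\OF_{n}$ or $g\in\OF_{n}$, and fails otherwise, which is the claimed formula.

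I do not expect any genuine obstacle here, since both ingredients are already in hand; the statement is a bookkeeping consequence of combining the right congruence identity with \lemref{Almost_left_congruence}. The only point requiring a little care is to make explicit that the right congruence condition is what kills the $(gf)^{\ast}=f^{\ast}$ requirement, so that \lemref{Almost_left_congruence} alone then decides whether the composite is $C(gf)$ or $0$. Had this been proved from scratch, the real work would sit inside \lemref{Almost_left_congruence} (the image/$\widetilde{W}$ analysis distinguishing $\max(W)=n$ from $\max(W)\neq n$), but that has already been carried out.
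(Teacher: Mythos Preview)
Your proposal is correct and follows essentially the same argument as the paper: the paper likewise notes that the right congruence condition (established in the ``Elementary observations'' paragraph) makes $(gf)^{\ast}=f^{\ast}$ automatic, and then invokes \lemref{Almost_left_congruence} to decide whether $(gf)^{+}=g^{+}$, yielding the stated dichotomy.
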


\paragraph*{Maximal semisimple image}

We finish this subsection with an observation on the maximal semisimple
image of the algebra $\Bbbk\Op_{n}$ where $\Bbbk$ is a field. Let
$D_{n}$ be the subcategory of $\mathcal{C}(\Op_{n})$ with the same
set of objects but the morphisms are only morphisms of the form $C(f_{Y,X})$
where $f_{Y,X}\in\OF_{n}$. Note that if $f_{W,Z},f_{Y,X}\in\OF_{n}$
then $\widetilde{Y}=Y$ (since $n\in Y$) and if $Z=Y$ then $C(f_{W,Z})\cdot C(f_{Y,X})=C(f_{W,X})$
where $f_{W,X}\in\OF_{n}$. Therefore, $D_{n}$ is indeed a subcategory
(without zero morphisms) of $\mathcal{C}(\Op_{n})$. Moreover, every
morphism $C(f_{Y,X})$ in $D_{n}$ has an inverse $C(f_{X,Y})$ so
$D_{n}$ is a groupoid. Note also that the identity morphisms $C(e_{X})$
(for $X\subseteq[n]$ with $n\in X$) are the only endomorphisms in
$D_{n}$.
\begin{lem}
Let $\Bbbk$ be a field. The Jacobson radical of $\Bbbk_{0}\mathcal{C}(\Op_{n})$
is the subvector space spanned by all the morphisms of the form $C(f_{Y,X})$
where $f_{Y,X}\notin\OF_{n}$ (or equivalently, $n\notin Y)$ and
\[
\Bbbk_{0}\mathcal{C}(\Op_{n})/\Rad(\Bbbk\mathcal{C}(\Op_{n}))\simeq\Bbbk D_{n}.
\]
\end{lem}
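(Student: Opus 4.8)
The goal is to identify the Jacobson radical of $\Bbbk_0\mathcal{C}(\Op_n)$ and to realize the quotient as the groupoid algebra $\Bbbk D_n$. The plan is to produce an ideal $I$ spanned by the ``bad'' morphisms $C(f_{Y,X})$ with $n\notin Y$, show that $I$ is nilpotent, and show that the quotient is isomorphic to $\Bbbk D_n$, which is semisimple; by the characterization of the radical recalled in the preliminaries (the unique nilpotent ideal with semisimple quotient) this forces $I=\Rad(\Bbbk_0\mathcal{C}(\Op_n))$.

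**Step 1: $I$ is a two-sided ideal.**
First I would verify that $I=\operatorname{span}\{C(f_{Y,X})\mid n\notin Y\}$ is a two-sided ideal of $\Bbbk_0\mathcal{C}(\Op_n)$. Using the simplified composition rule of \lemref{Partial_composition_description}, a nonzero product $C(g)\cdot C(f)$ requires $g\in\OF_n$ or $f\in\OF_n$, and by \lemref{Closedness_of_good_arrows} the product of a ``good'' and a ``bad'' arrow is again ``bad.'' Thus if either factor lies in $I$ then the product lies in $I$ (when nonzero) or is $0$; in every case the span $I$ is closed under left and right multiplication by arbitrary basis morphisms.

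**Step 2: $I$ is nilpotent.**
The key structural fact is that a ``bad'' morphism strictly raises rank. Indeed, a morphism $C(f_{Y,X})$ goes from object $e_X$ to $e_{\widetilde Y}$, and $f_{Y,X}\notin\OF_n$ exactly when $n\notin Y$, in which case $\widetilde Y=Y\cup\{n\}$ has one more element than $Y$; the domain $X$ already contains $n$, so one checks that $|\widetilde Y|=|Y|+1$ while the source object has size $|X|$. Reading off the hom-set analysis just before the example, a bad arrow strictly increases the cardinality of the indexing set of the object by exactly one. Since each basis morphism in $I$ is bad and raises the object-size by at least one, any product of $m$ morphisms that contains $k$ bad factors raises the size by at least $k$; as object sizes are bounded by $n$, any product of more than $n$ bad morphisms must vanish. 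Hence $I^{\,n+1}=0$ and $I$ is nilpotent. I expect this rank-jump bookkeeping to be the main obstacle, since one must be careful that the good factors neither decrease the size nor allow a bad product to stay in a fixed hom-set.

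**Step 3: the quotient is $\Bbbk D_n$ and is semisimple.**
Passing to $A=\Bbbk_0\mathcal{C}(\Op_n)/I$, the surviving basis elements are exactly the good morphisms $C(f_{Y,X})$ with $f_{Y,X}\in\OF_n$, and by the computation preceding the lemma the induced multiplication is precisely the composition in the subcategory $D_n$; products that left $\OF_n$ are sent to $0$ in the quotient, matching the groupoid composition. This gives a $\Bbbk$-algebra isomorphism $A\simeq \Bbbk D_n$. Since $D_n$ is a groupoid whose only endomorphisms are identities, it is a disjoint union of connected groupoids each equivalent to a one-object trivial group, so $\Bbbk D_n$ is a product of matrix algebras $M_{k}(\Bbbk)$ and is therefore semisimple over the field $\Bbbk$. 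Having exhibited $I$ as a nilpotent ideal with semisimple quotient, the uniqueness characterization of the radical in \subsecref{Algebras} yields $I=\Rad(\Bbbk_0\mathcal{C}(\Op_n))$ and the stated isomorphism $\Bbbk_0\mathcal{C}(\Op_n)/\Rad(\Bbbk\mathcal{C}(\Op_n))\simeq\Bbbk D_n$.
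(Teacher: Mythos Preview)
Your argument is correct and follows the same overall scheme as the paper: show the span $I$ of ``bad'' morphisms is a nilpotent ideal, identify the quotient with $\Bbbk D_n$, observe that $\Bbbk D_n$ is semisimple, and invoke the characterization of the radical.

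The one place where you diverge is Step~2. You establish nilpotency by a rank-counting argument (bad arrows raise the object size, so $I^{n+1}=0$), and you flag this bookkeeping as ``the main obstacle''. In fact it is immediate from what you already noted in Step~1: by \lemref{Partial_composition_description} a nonzero product $C(g)\cdot C(f)$ requires at least one of $g,f$ to lie in $\OF_n$, so the product of any two bad basis morphisms is zero and hence $I^2=0$. This is exactly what the paper does, and it makes your worry about interleaved good factors moot (elements of $I^k$ are sums of products of $k$ bad basis morphisms, with no good factors present). Your rank argument is not wrong, just unnecessarily weak and circuitous given the tool you already had in hand.
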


\begin{proof}
Denote by $R$ the subvector space of $\Bbbk_{0}\mathcal{C}(\Op_{n})$
spanned by all the morphisms of the form $C(f_{Y,X})$ where $f_{Y,X}\notin\OF_{n}$.
First note that according to \lemref{Partial_composition_description}
if $f_{W,Z},f_{Y,X}\notin\OF_{n}$ then 
\[
C(f_{W,Z})\cdot C(f_{Y,X})=0.
\]
Moreover, if $f_{Y,X}\notin\OF_{n}$ and $f_{W,Z}\in\OF_{n}$ are
such that $C(f_{W,Z})\cdot C(f_{Y,X})\neq0$ then $f_{W,Z}f_{Y,X}\notin\OF_{n}$
according to \lemref{Closedness_of_good_arrows} (and the dual holds
if $C(f_{Y,X})\cdot C(f_{W,Z})\neq0$). Both facts together imply
that $R$ is a nilpotent ideal (of nilpotency degree $2$). Since
every non-zero morphism of $\mathcal{C}(\Op_{n})$ is either in $R$
or in $D_{n}$ it is clear that $\Bbbk_{0}\mathcal{C}(\Op_{n})/R\simeq\Bbbk D_{n}$.
It is well-known that the algebra $\Bbbk\mathcal{G}$ of a finite
groupoid $\mathcal{G}$ is semisimple if and only if the order of
every endomorphism group is invertible in the field $\Bbbk$ (\cite[Theorem 8.15]{Steinberg2016}).
In our case, the endomorphism groups of $D_{n}$ are trivial so $\Bbbk D_{n}$
is semisimple for every field $\Bbbk$. In conclusion, $R$ is a nilpotent
ideal such that $\Bbbk_{0}\mathcal{C}(\Op_{n})/R$ is semisimple so
$R=\Rad\Bbbk_{0}\mathcal{C}(\Op_{n})$ and the statement follows.
\end{proof}
\begin{cor}
For any field $\Bbbk$. The maximal semisimple image of $\Bbbk\Op_{n}$
is isomorphic to $\Bbbk\OF_{n}$.
\end{cor}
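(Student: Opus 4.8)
The plan is to reduce the statement to the preceding lemma together with a single application of the isomorphism theorem to the submonoid $\OF_{n}$. Recall that for a finite-dimensional algebra $A$ over a field the maximal semisimple image is $A/\Rad A$, and that the radical is preserved under any algebra isomorphism. By \thmref{Order_preserving_isomorphism} we have $\Bbbk\Op_{n}\simeq\Bbbk_{0}\mathcal{C}(\Op_{n})$, so their maximal semisimple images coincide, and the previous lemma identifies the quotient $\Bbbk_{0}\mathcal{C}(\Op_{n})/\Rad(\Bbbk_{0}\mathcal{C}(\Op_{n}))$ with $\Bbbk D_{n}$. Hence the maximal semisimple image of $\Bbbk\Op_{n}$ is isomorphic to $\Bbbk D_{n}$, and the entire task reduces to proving $\Bbbk D_{n}\simeq\Bbbk\OF_{n}$.

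To this end I would recognise $D_{n}$ as the category $\mathcal{C}_{\bullet}(\OF_{n})$ associated with the submonoid $\OF_{n}$. Indeed $E=\{e_{X}\mid n\in X\}\subseteq\OF_{n}$ since each $e_{X}$ fixes $n$, and because $\OF_{n}$ is a submonoid the operations $a\mapsto a^{\ast}$, $a\mapsto a^{+}$ and all products appearing below are computed inside $\OF_{n}$ exactly as in $\Op_{n}$; thus $\OF_{n}$ is again a reduced $E$-Fountain semigroup. Its objects are the $e_{X}$ and its non-zero morphisms are the $C(f_{Y,X})$ with $f_{Y,X}\in\OF_{n}$, with composition $C(f_{W,Y})\cdot C(f_{Y,X})=C(f_{W,X})$ --- which is precisely the data defining $D_{n}$.

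Next I would verify that $\OF_{n}$ satisfies the hypotheses of the congruence-condition form of the isomorphism theorem, i.e.\ the dual of \remref{Simplified_isomorphism}. Because every $f\in\OF_{n}$ fixes $n$, its image always contains $n$, so $a^{+}=e_{\im a}$; consequently $\Rt=\Rc$ on $\OF_{n}$, and since $\Rc$ is always a left congruence the left congruence condition holds. Dually $\Lt=\Lc$ on $\OF_{n}$, exactly as was shown for $\Op_{n}$, giving the right congruence condition. The generalized left ample condition is inherited from \propref{lef_ample_on}: every term of the defining identity lies in $\OF_{n}$, so its validity in $\Op_{n}$ forces it in $\OF_{n}$. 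Finally $\OF_{n}$ is finite, so $\trianglelefteq_{r}$ is principally finite, and it is contained in the restriction to $\OF_{n}$ of the partial order $\preceq$ of \lemref{Contained_in_principally_finite}. The dual of \corref{The_right_congruence_case} then makes $\mathcal{C}_{\bullet}(\OF_{n})=D_{n}$ a genuine category, and the dual of \remref{Simplified_isomorphism} (equivalently \thmref{isomorphism_theorem-dual}) yields $\Bbbk\OF_{n}\simeq\Bbbk\mathcal{C}_{\bullet}(\OF_{n})=\Bbbk D_{n}$. Combining with the first paragraph gives the claim.

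The main obstacle I anticipate is bookkeeping rather than depth: one must check carefully that $D_{n}$ is \emph{literally} $\mathcal{C}_{\bullet}(\OF_{n})$ --- matching objects, morphisms, and above all the composition rule --- and that passing to the submonoid $\OF_{n}$ does not alter $a^{\ast}$, $a^{+}$, or the products entering the generalized left ample identity, which holds precisely because $\OF_{n}$ is a submonoid containing $E$. Once these compatibilities are recorded, the result is a direct consequence of the theorem already established for $\OF_{n}$.
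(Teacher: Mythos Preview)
Your proposal is correct and follows essentially the same three-step route as the paper: $\Bbbk\Op_{n}\simeq\Bbbk_{0}\mathcal{C}(\Op_{n})$, its maximal semisimple image is $\Bbbk D_{n}$, and $\Bbbk D_{n}\simeq\Bbbk\OF_{n}$. The only difference is that the paper dispatches the third step by citing \cite[Section 4.2]{stein2022}, whereas you supply a self-contained argument---identifying $D_{n}$ with $\mathcal{C}_{\bullet}(\OF_{n})$ and invoking the dual of \remref{Simplified_isomorphism} for $\OF_{n}$---which is exactly the kind of argument that reference contains.
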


\begin{proof}
According to \thmref{Order_preserving_isomorphism}, $\Bbbk\Op_{n}\simeq\Bbbk_{0}\mathcal{C}(\Op_{n})$.
We have just proved that the maximal semisimple image of the last
is $\Bbbk D_{n}$. Finally, it is shown in \cite[Section 4.2]{stein2022}
that $\Bbbk D_{n}\simeq\Bbbk\OF_{n}$ which finishes the proof.
\end{proof}

\subsection{Binary relations with demonic composition}

Let $\B_{n}$ be the set of binary relations on the set $[n]=\{1,\ldots,n\}$.
It is a monoid under the standard ``angelic'' composition:
\[
\beta\cdot\alpha=\{(x,y)\mid\exists z\quad(x,z)\in\alpha,\quad(z,y)\in\beta\}
\]
-note that we are composing right to left here. Set $E=\{\id_{A}\mid A\subseteq[n]\}$
where $\id_{A}=\{(a,a)\mid a\in A\}$. This is a commutative subsemigroup
of $\B_{n}$. The monoid $\B_{n}$ is a reduced $E$-Fountain semigroup
which satisfies both the right and left congruence conditions and
\[
\alpha^{\ast}=\id_{\dom(\alpha)},\quad\alpha^{+}=\id_{\im(\alpha)}
\]
(see \cite[Section 3.2]{East2021}). We can also define ``demonic''
composition:

\[
\beta\ast\alpha=\{(x,y)\in\beta\cdot\alpha\mid(x,z)\in\alpha\implies z\in\dom(\beta)\}.
\]
Denote by $\B_{n}^{d}$ the set $\B_{n}$ with demonic composition
as operation. It is known that $\B_{n}^{d}$ is indeed a monoid. Moreover,
it is a reduced $E$-Fountain semigroup which satisfies the right
congruence condition and the right ample condition (see \cite{Stokes2021}).
The right ample condition is a stronger condition than our generalized
right ample condition (\cite[Proposition 3.14]{Stein2021}). Since
$E$ is a commutative subsemigroup we know that $\trianglelefteq_{l}$
is a partial order. Set $\mathcal{C}=\mathcal{C}_{\bullet}(\B_{n}^{d})$
and note that here $\mathcal{C}$ is a category by \corref{The_right_congruence_case}.
Therefore, we can apply \remref{Simplified_isomorphism}. The objects
of $\mathcal{C}$ can be identified with subsets of $[n]$. For every
$\alpha\in\B_{n}$ there is an associated morphism $C(\alpha)$ from
$\dom(\alpha)$ to $\im(\alpha)$. If $\beta^{\ast}=\alpha^{+}$ then
$\dom(\beta)=\im(\alpha)$ hence $\beta\ast\alpha=\beta\cdot\alpha$.
This shows that for composition of morphisms in this category, the
angelic and demonic definitions coincide. \remref{Simplified_isomorphism}
implies:
\begin{thm}
Let $\Bbbk$ be a commutative unital ring. Let $\mathcal{C}$ be the
category whose objects are subsets of $[n]$ and the hom-set $\mathcal{C}(X,Y)$
(for $X,Y\subseteq[n]$) contains all total onto relations from $X$
to $Y$. Then 
\[
\Bbbk\B_{n}^{d}\simeq\Bbbk\mathcal{C}
\]
is an isomorphism of $\Bbbk$-algebras.
\end{thm}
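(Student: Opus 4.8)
The plan is to invoke \remref{Simplified_isomorphism} and then render the abstract category $\mathcal{C}_{\bullet}(\B_{n}^{d})$ completely explicit. All the needed hypotheses have already been assembled above: $\B_{n}^{d}$ is a reduced $E$-Fountain semigroup with $E=\{\id_{A}\mid A\subseteq[n]\}$, it satisfies the right congruence condition, and the right ample condition implies the generalized right ample condition by \cite[Proposition 3.14]{Stein2021}. Since $E$ is a commutative subsemigroup the relation $\trianglelefteq_{l}$ is a partial order, and it is principally finite because $\B_{n}^{d}$ is finite, so $\trianglelefteq_{l}$ is (trivially) contained in a principally finite partial order. Thus the hypotheses of \thmref{isomorphism_theorem} together with the right congruence condition hold, and \remref{Simplified_isomorphism} yields $\Bbbk\B_{n}^{d}\simeq\Bbbk\mathcal{C}_{\bullet}(\B_{n}^{d})$ for any commutative unital ring $\Bbbk$. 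It remains only to identify $\mathcal{C}_{\bullet}(\B_{n}^{d})$ with the concrete category $\mathcal{C}$ of the statement.

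On objects I would use the bijection $\id_{A}\leftrightarrow A$ between $E$ and the subsets of $[n]$. For morphisms, recall that $C(\alpha)$ has domain $\alpha^{\ast}=\id_{\dom(\alpha)}$ and range $\alpha^{+}=\id_{\im(\alpha)}$; hence the morphisms from $X$ to $Y$ are exactly the $C(\alpha)$ with $\dom(\alpha)=X$ and $\im(\alpha)=Y$. Every pair of such an $\alpha$ lies in $X\times Y$, and $\dom(\alpha)=X$, $\im(\alpha)=Y$ say precisely that $\alpha$, viewed as a relation from $X$ to $Y$, is total and onto. Therefore $C(\alpha)\mapsto\alpha$ is a bijection between the hom-set of $\mathcal{C}_{\bullet}(\B_{n}^{d})$ from $X$ to $Y$ and the set of total onto relations from $X$ to $Y$, which is exactly the description of $\mathcal{C}(X,Y)$ in the statement. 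The identity $C(\id_{A})$ corresponds to the identity relation on $A$, so identities are matched as well.

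Finally I would transport the composition. The key observation is already in place: if $\beta^{\ast}=\alpha^{+}$, equivalently $\dom(\beta)=\im(\alpha)$, then the demonic and angelic composites coincide, $\beta\ast\alpha=\beta\cdot\alpha$, so $C(\beta)\cdot C(\alpha)=C(\beta\ast\alpha)$ corresponds to the ordinary relational composite $\beta\cdot\alpha$. To conclude I would record the short routine fact that the angelic composite of a total onto relation from $X$ to $Y$ with one from $Y$ to $Z$ is again total onto from $X$ to $Z$ (totality propagates forward through the middle object, surjectivity backward), which shows simultaneously that $\mathcal{C}$ is a genuine category and that $C(\alpha)\mapsto\alpha$ is an isomorphism of categories. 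Passing to algebras gives $\Bbbk\mathcal{C}_{\bullet}(\B_{n}^{d})\simeq\Bbbk\mathcal{C}$ and hence the theorem. There is no serious obstacle here; the one point requiring care is to confirm that composition in the concrete category $\mathcal{C}$ is the ordinary (angelic) composition of relations, so that the coincidence $\beta\ast\alpha=\beta\cdot\alpha$ on composable morphisms is precisely what makes the two categories agree.
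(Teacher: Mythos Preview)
Your proposal is correct and follows essentially the same approach as the paper: verify the hypotheses of \remref{Simplified_isomorphism} (reduced $E$-Fountain, right congruence, generalized right ample via the right ample condition, $\trianglelefteq_{l}$ a partial order since $E$ is commutative), then identify $\mathcal{C}_{\bullet}(\B_{n}^{d})$ explicitly with the category of subsets and total onto relations, using that demonic and angelic composition coincide when $\dom(\beta)=\im(\alpha)$. You spell out a couple of routine verifications (that $\alpha\subseteq X\times Y$ and that composites of total onto relations remain total onto) more explicitly than the paper, but the argument is the same.
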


\begin{rem}
Similar examples can be derived from any domain semiring as explained
in \cite[Section 6.1]{Stokes2021}.
\end{rem}

For every relation $\alpha\in\B_{n}$ denote by $\alpha^{-1}$ the
converse relation $\alpha^{-1}=\{(y,x)\mid(x,y)\in\alpha\}$. It is
clear that we can define $(-)^{-1}:\mathcal{C}\to\mathcal{C}$ a contravariant
isomorphism which is identity on objects and $(C(\alpha))^{-1}=C(\alpha^{-1})$
on morphisms. Therefore, $\mathcal{C\simeq\mathcal{C^{\op}}}$ and
we can deduce that:
\begin{prop}
The algebra $\Bbbk\B_{n}^{d}$ is isomorphic to its opposite.
\end{prop}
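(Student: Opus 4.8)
The plan is to transport the self-duality of the category $\mathcal{C}$ through the algebra isomorphism $\Bbbk\B_{n}^{d}\simeq\Bbbk\mathcal{C}$ established above. The guiding principle is that a $\Bbbk$-algebra $A$ is isomorphic to its opposite exactly when it admits an anti-automorphism; so I would manufacture such an anti-automorphism on $\Bbbk\mathcal{C}$ directly from the converse operation $(-)^{-1}$ and then move it back to $\Bbbk\B_{n}^{d}$.

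First I would record that the contravariant self-isomorphism $(-)^{-1}$ of $\mathcal{C}$ noted just above is genuinely well defined: if $\alpha$ is a total onto relation from $X$ to $Y$ then $\dom(\alpha^{-1})=\im(\alpha)=Y$ and $\im(\alpha^{-1})=\dom(\alpha)=X$, so $\alpha^{-1}$ is total onto from $Y$ to $X$. Thus $(-)^{-1}$ fixes objects, sends $\mathcal{C}(X,Y)$ bijectively onto $\mathcal{C}(Y,X)$, and is involutive since $(\alpha^{-1})^{-1}=\alpha$. Contravariance is the standard identity $(\beta\alpha)^{-1}=\alpha^{-1}\beta^{-1}$ for converses. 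A small but essential point is that composability is symmetric under conversion: $C(\beta)\cdot C(\alpha)$ is defined iff $\db(C(\beta))=\rb(C(\alpha))$, which is the same equality of objects needed for $C(\alpha^{-1})\cdot C(\beta^{-1})$ to be defined, so the zero products match up on both sides as well.

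Next I would extend $(-)^{-1}$ by $\Bbbk$-linearity to a map $\Phi:\Bbbk\mathcal{C}\to\Bbbk\mathcal{C}$ determined on the basis by $\Phi(C(\alpha))=C(\alpha^{-1})$. Applying $(\beta\alpha)^{-1}=\alpha^{-1}\beta^{-1}$ on basis elements, together with the composability remark which handles the vanishing case, gives $\Phi(C(\beta)\cdot C(\alpha))=\Phi(C(\alpha))\cdot\Phi(C(\beta))$, so $\Phi$ reverses multiplication; being its own inverse, $\Phi$ is an anti-automorphism of $\Bbbk\mathcal{C}$. Since an anti-automorphism of an algebra $A$ is precisely an isomorphism $A\simeq A^{\op}$, this yields $\Bbbk\mathcal{C}\simeq(\Bbbk\mathcal{C})^{\op}$ (the underlying reason that $\mathcal{C}\simeq\mathcal{C}^{\op}$).

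Finally I would combine this with $\Bbbk\B_{n}^{d}\simeq\Bbbk\mathcal{C}$. An isomorphism of algebras induces an isomorphism of their opposites, so
\[
(\Bbbk\B_{n}^{d})^{\op}\simeq(\Bbbk\mathcal{C})^{\op}\simeq\Bbbk\mathcal{C}\simeq\Bbbk\B_{n}^{d},
\]
which is the claim. I do not expect a real obstacle here: the only items requiring care are that conversion preserves the ``total onto'' property and respects composability of morphisms, both immediate once one observes that $\dom$ and $\im$ simply swap under $(-)^{-1}$; everything else is the formal equivalence between self-anti-isomorphisms and isomorphisms with the opposite algebra.
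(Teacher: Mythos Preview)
Your proof is correct and follows essentially the same approach as the paper: both use the contravariant self-isomorphism $(-)^{-1}$ of $\mathcal{C}$ to obtain $\mathcal{C}\simeq\mathcal{C}^{\op}$, hence $\Bbbk\mathcal{C}\simeq(\Bbbk\mathcal{C})^{\op}$, and then transport this through the isomorphism $\Bbbk\B_{n}^{d}\simeq\Bbbk\mathcal{C}$. You have simply supplied more of the routine verifications (well-definedness of $(-)^{-1}$ on $\mathcal{C}$, handling of the non-composable case in $\Bbbk\mathcal{C}$) that the paper leaves implicit.
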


\bibliographystyle{plain}
\bibliography{library}

\end{document}